\DeclareRobustCommand{\em}{%
	\@nomath\em \if b\expandafter\@car\f@series\@nil
	\normalfont \else \slshape \fi}
\tikzstyle{tikzfig}=[baseline=-0.25em,scale=0.5]
\tikzstyle{none}=[inner sep=0mm]
\newcommand{\tikzfig}[1]{%
	{\tikzstyle{every picture}=[tikzfig]
		\IfFileExists{#1.tikz}
		{\input{#1.tikz}}
		{%
			\IfFileExists{./figures/#1.tikz}
			{\input{./figures/#1.tikz}}
			{\tikz[baseline=-0.5em]{\node[draw=red,font=\color{red},fill=red!10!white] {\textit{#1}};}}%
	}}%
}
\tikzstyle{every loop}=[]
\tikzstyle{black dot}=[fill=black, draw=black, shape=circle, minimum size=3pt, inner sep=0pt]
\tikzstyle{black dot small}=[fill=black, draw=black, shape=circle, minimum size=2pt, inner sep=0pt]
\tikzstyle{fblack dot}=[fill=black, draw=red, shape=circle, minimum size=2pt, inner sep=0pt]
\tikzstyle{wbox}=[fill=white, draw=black, shape=rectangle, minimum height=0.5cm, minimum width=0.01cm]
\tikzstyle{bbox}=[fill=white, draw=blue, shape=rectangle, minimum height=0.5cm, minimum width=0.01cm]
\tikzstyle{rbox}=[fill=white, draw=red, shape=rectangle, minimum height=0.5cm, minimum width=0.01cm]
\tikzstyle{bwbox}=[draw=blue, shape=rectangle, minimum width=2cm, minimum height=0.5cm]
\tikzstyle{bbwbox}=[draw=blue, shape=rectangle, minimum width=1cm, minimum height=1cm]
\tikzstyle{big white circle}=[fill=white, draw=black, shape=circle, minimum width=0.75cm]
\tikzstyle{white dot big}=[fill=white, draw=black, shape=circle, inner sep=1pt]
\tikzstyle{white dot}=[fill=white, draw=black, shape=circle, minimum size=3pt, inner sep=0pt]
\tikzstyle{flat box}=[fill=white, draw=black, shape=rectangle, minimum width=1.3cm, minimum height=0.5cm,fill=morphismcolor]
\tikzstyle{square}=[fill=white, draw=black, shape=rectangle]
\tikzstyle{flat box 2}=[fill=white, draw=black, shape=rectangle, minimum height=0.5cm, minimum width=0.01cm,fill=morphismcolor]
\tikzstyle{bigbox}=[fill=white, draw=black, shape=rectangle, minimum height=0.5cm, minimum width=0.8cm,fill=white]
\tikzstyle{over }=[front]
\tikzstyle{theta}=[fill=blue, draw=blue, shape=ellipse, minimum height=6pt, minimum width=6pt, inner sep=0pt]
\tikzstyle{thetabig}=[fill=blue, draw=blue, shape=ellipse, minimum width=1cm, minimum height=0.01cm]
\tikzstyle{thetainv}=[fill=blue, draw=red, shape=ellipse, minimum height=6pt, minimum width=6pt, inner sep=0pt]
\tikzstyle{thetabinv}=[fill=blue, draw=red, shape=ellipse, minimum width=1cm, minimum height=0.01cm]
\tikzstyle{bigdisk}=[draw=black, shape=circle, minimum width=3cm]
\tikzstyle{wdisk}=[shape=circle, minimum width=0.48cm,fill=white]
\tikzstyle{bigdisk2}=[draw=black, fill=lightgray, shape=circle, minimum width=3cm]
\tikzstyle{little disk}=[fill=white, draw=black, shape=circle, minimum width=0.5cm]
\tikzstyle{mid arrow}=[-, postaction={on each segment={mid arrow}}]
\tikzstyle{end arrow}=[->]
\tikzstyle{mover}=[-, link]
\tikzstyle{string}=[-, draw=blue,postaction={on each segment={mid arrow}}]
\tikzstyle{stringd}=[-, dotted,draw=blue,postaction={on each segment={mid arrow}}]
\tikzstyle{red}=[-,draw=red]
\tikzstyle{mydots}=[-,dotted,dashed,draw=gray]
\tikzstyle{mydotsblack}=[-,dotted,dashed,draw=black]
\tikzstyle{open}=[-, line width=1pt,draw=blue]
\tikzstyle{thick}=[-,line width=1pt]
\tikzstyle{rarrow}=[->,draw=red]
\tikzstyle{red mid arrow}=[-, draw={rgb,255: red,214; green,42; blue,51}, postaction={on each segment={mid arrow}}, line width=1pt]
\tikzstyle{RED}=[-, draw={rgb,255: red,214; green,42; blue,51}]
\tikzstyle{REDdashed}=[-,dashed, draw={rgb,255: red,214; green,42; blue,51}]
\tikzstyle{REDarrow}=[->, draw={rgb,255: red,214; green,42; blue,51}]
\tikzstyle{darrow}=[->,dotted]
\tikzstyle{blue}=[-, draw=blue]
\tikzstyle{blue mid arrow}=[-, draw={rgb,255: red,23; green,37; blue,167}, postaction={on each segment={mid arrow}}, line width=1pt]
\tikzstyle{over}=[-, link]
\tikzstyle{mover}=[-, link]
\tikzstyle{mapsto}=[{|->}]
\tikzset{
	on each segment/.style={
		decorate,
		decoration={
			show path construction,
			moveto code={},
			lineto code={
				\path [#1]
				(\tikzinputsegmentfirst) -- (\tikzinputsegmentlast);
			},
			curveto code={
				\path [#1] (\tikzinputsegmentfirst)
				.. controls
				(\tikzinputsegmentsupporta) and (\tikzinputsegmentsupportb)
				..
				(\tikzinputsegmentlast);
			},
			closepath code={
				\path [#1]
				(\tikzinputsegmentfirst) -- (\tikzinputsegmentlast);
			},
		},
	},
	mid arrow/.style={postaction={decorate,decoration={
				markings,
				mark=at position .7 with {\arrow[#1]{stealth}}
	}}},
}
\tikzset{%
	link/.style    = { white, double = black, line width = 1.8pt,
		double distance = 0.4pt },
	channel/.style = { white, double = black, line width = 0.8pt,
		double distance = 0.8pt },
}
\tikzstyle{tikzfig}=[baseline=-0.25em,scale=0.5]
\tikzstyle{none}=[inner sep=0mm]
\tikzstyle{every loop}=[]
\newtheorem{theorem}{Theorem}[subsection]
\newtheorem{corollary}[theorem]{Corollary}
\newtheorem{lemma}[theorem]{Lemma}
\newtheorem{proposition}[theorem]{Proposition}
\newtheorem{conj}[theorem]{Expectation}
\newtheorem{question}[theorem]{Question}
\newenvironment{customthm}[1]
{\innercustomthm}
  {\endinnercustomthm}
\theoremstyle{definition}
\newtheorem{definition}[theorem]{Definition}
\newtheorem{remark}[theorem]{Remark}
\newtheorem{exx}[theorem]{Example}
\definecolor{Blue}  {rgb} {0.282352,0.239215,0.803921}
\definecolor{Green} {rgb} {0.133333,0.545098,0.133333}
\definecolor{Red}   {rgb} {0.803921,0.000000,0.000000}
\definecolor{Violet}{rgb} {0.580392,0.000000,0.827450}
\newcounter{jfc}
\newcounter{jfctodoab}
\newcounter{jfctodolw}
\newcommand{\cat}[1]{\mathcal{#1}}
\newcommand{\catf}[1]{\mathsf{#1}}
\newcommand{\Graphs}{\catf{Graphs}}
\newcommand{\Legs}{\catf{Legs}}
\newcommand{\Surf}{\catf{Surf}}
\newcommand{\Teich}{\catf{Teich}}
\newcommand{\Rexf}{\catf{Rex}^\catf{f}}
\newcommand{\vect}{\catf{vect}}
\newcommand{\Map}{\catf{Map}}
\def\Cat{\catf{Cat}}
\def\End{\catf{End}}
\newcommand{\Ac}{\cat{A}}
\newcommand{\Tc}{\cat{T}}
\newcommand{\Cc}{\cat{C}}
\newcommand{\Dd}{\mathbb{D}}
\newcommand{\Gpd}{\catf{Grpd}}
\newcommand{\EndAk}{\End^\Cc_\kappa}
\newcommand{\ra}[1]{\xrightarrow{\   #1    \ }}
\def\Hom{\mathrm{Hom}}
\def\op{\mathrm{op}}
\def\id{\mathrm{id}}
\newcommand{\simeqaccent}[1]{%
  \stackengine{0ex}{#1}{%
    \smash{\ensurestackMath{%
      \stackengine{0.2ex}{\overline{\phantom{#1}}}{\widetilde{\phantom{#1}}}{O}{c}{F}{F}{L}%
    }}%
  }{O}{c}{F}{F}{L}%
}
\newcommand{\triangleaccent}[1]{%
  \stackengine{0ex}{#1}{%
    \smash{\ensurestackMath{%
      \stackengine{0.2ex}{\overline{\phantom{#1}}}{\widehat{\phantom{#1}}}{O}{c}{F}{F}{L}%
    }}%
  }{O}{c}{F}{F}{L}%
}
\newcommand{\bTMgn}{\simeqaccent{{\cat{M}}}_{g,n}}
\newcommand{\btMgn}{\triangleaccent{{\cat{M}}}_{g,n}}
\newcommand{\bMgn}{{\overline{\cat{M}}}_{g,n}}
\newcommand{\TM}{\widetilde{\cat{M}}}
\newcommand{\bTM}{\simeqaccent{{\cat{M}}}}
\newcommand{\TMgn}{\widetilde{{\cat{M}}}_{g,n}}
\newcommand{\TDelta}{\widetilde{\Delta}}
\newcommand{\VV}{\mathbb{V}}
\newcommand{\CC}{\mathbb{C}}
\newcommand{\QQ}{\mathbb{Q}}
\newcommand{\ZZ}{\mathbb{Z}}
\newcommand{\NN}{\mathbb{N}}
\newcommand{\PP}{\mathbb{P}}
\renewcommand{\SS}{\mathbb{S}}
\newcommand{\ee}{\mathscr{I}}
\newcommand{\EE}{\mathfrak{I}}
\newcommand{\Fk}{\mathfrak{F}}
\newcommand{\Oc}{\mathcal{O}}
\newcommand{\Lc}{\mathcal{L}}
\newcommand{\piM}{\Pi_1(\TM)}
\newcommand{\Sp}{\mathsf{Sp}}
\newcommand{\Spec}{\mathrm{Spec}}
\newcommand{\Vir}{\mathrm{Vir}}
\newcommand{\cbotimes}{\underset{\VV}{\otimes}}
\newcommand{\hlotimes}{\underset{{\text{\tiny HL}}}{\otimes}}
\newcommand{\MTA}{\mathfrak{A}}
\title[Modular functors from conformal blocks of VOAs]{Modular functors from conformal blocks of rational vertex operator algebras}
\author{Chiara Damiolini}
\author{Lukas Woike}
\begin{document}

\begin{abstract}\noindent 
For a vertex operator algebra $V$, one may naturally define spaces of conformal blocks following a construction of Frenkel--Ben-Zvi generalized by Damiolini--Gibney--Tarasca. If $V$ is strongly rational, these spaces of conformal blocks form vector bundles over a suitable moduli space of algebraic curves. In this article, we establish, under the same assumptions, the widely expected topological result that the spaces of conformal blocks produce a modular functor, i.e.\ a modular algebra over an extension of the surface operad. This entails that the category $\Cc_V$ of admissible $V$-modules inherits from the topology of genus zero surfaces a ribbon Grothendieck--Verdier structure that leads even to the structure of a modular fusion category whose structure comes directly from the spaces of conformal blocks of $V$. As a direct consequence, we prove that the modular functor from conformal blocks extends to a three-dimensional topological field theory and comes with a description in terms of factorization homology.
\end{abstract}

\maketitle
\vspace{-5mm}

\section{Introduction} 
Informally, a modular functor~\cite{Segal,ms89,turaev,tillmann,baki,jfcs} is a collection of  representations of extensions mapping class groups which satisfy appropriate compatibilities. 
More concisely, a modular functor can be described as a modular algebra over an appropriate extension of the surface operad~\cite{brochierwoike} as recalled below in \cref{def:MF}.
Translating this into the algebro-geometric context via the Riemann--Hilbert correspondence, one could describe a modular functor as a collection of (twisted) D-modules on an appropriate moduli space of curves, which again are required to satisfy appropriate conditions.  In this paper we show that the conformal blocks from a strongly rational vertex operator algebra (VOA) $V$ naturally define a modular functor. 

Let us recall that, given $V$-modules $X_1, \dots, X_n$, in \cite{FBZ:2004} Ben-Zvi and Frenkel construct the \textit{sheaf of coinvariants} $\VV_g(X_\bullet)$ over the moduli space $\TMgn$ of $n$-marked curves of genus $g$ (with non-zero tangent data). The sheaves of \textit{conformal blocks} are the dual sheaf. Furthermore, the conformal structure on $V$ (and on the modules $X_i$s) induces a projectively flat connection on $\VV_g(X_\bullet)$. This construction has been generalized in \cite{DGT1,DGT2} by Damiolini, Gibney and Tarasca to allow also curves with \textit{nodal singularities}. One obtains therefore a sheaf, which we still denote $\VV_g(X_\bullet)$, over the  moduli space $\bTMgn$ of $n$-marked possibly nodal curves of genus $g$ (with non-zero tangent data). When $V$ is strongly rational, in \cite{DGT2} it is further shown that $\VV_g(X_\bullet)$ is indeed a projectively flat vector bundle with logarithmic singularities (see \cref{sec:connection}). 
In this paper we show that these twisted D-modules define a modular functor:

\begin{customthm}{\ref{thm:VGisMF}} If $V$ is strongly rational, then sheaves of coinvariants define a modular functor.
\end{customthm}

The fact that the sheaves of coinvariants form a modular functor in the rational case was stated as an expectation by Ben-Zvi and Frenkel in \cite[Section~10.1.4]{FBZ:2004}. When $V=L_\ell(\mathfrak{g})$ (the simple affine VOA at positive integer level), \cref{thm:VGisMF} was proved in \cite{baki}. We note that their work is heavily based on the unpublished, but foundational manuscript of Beilinson, Feigin and Mazur \cite{BFM} which, besides considering the case of rational Virasoro VOAs, can also be seen as a precursor of \cite{baki}.

As a first remark, we note that the extension of $\VV_g(X_\bullet)$ to $\bTMgn$ (given in \cite{DGT1, DGT2}) is crucial to actually describe the compatibility conditions that a modular functor requires the bundles $\VV_g(X_\bullet)$ to satisfy. Informally, this follows from the fact that the topological operation of \textit{sewing} a surface together along some parametrized boundaries---which is one of the properties that a modular functor need to satisfy---in algebraic geometry can be translated into the two operations of degenerating a smooth curve to a nodal curve, and then smoothing it out to obtain another smooth curve (see also Figure~\ref{fig:teich}).

\smallskip

Before commenting on the proof of \cref{thm:VGisMF}, we  will first discuss two of its main consequences which we present in \cref{sec:final}.

\subsection{The topologically inherited structure on \texorpdfstring{$\Cc_V$}{CV}} By \cite[Theorem 7.17]{cyclic} every modular functor induces on its underlying category $\Cc$ naturally the structure of a \textit{ribbon Grothendieck--Verdier category}  (see \cref{sec:GVbackground} for details). Applying this to the modular functor obtained from coinvariants, we can therefore obtain a balanced braided monoidal product $\cbotimes$ on $\Cc_V$---the category of admissible $V$-modules---in an algebro-geometric fashion (see \cref{cor:tensor}). Moreover, by further inspection, and using the recent results of \cite{etingof.penneys:2024:rigidity}, we can show the following result:

\begin{customthm}{\ref{thm:modularfusion}} Coinvariants induce on $\Cc_V$ the structure of a modular fusion category.    
\end{customthm}

We observe that, together with~\cite{brochierwoike}, this result implies a factorization homology description of $\VV$ (Corollary~\ref{corconnected}). 

The fact that on the category of $V$-modules $\Cc_V$ one can define the structure of a modular fusion category has also been shown by Huang and Lepowsky. In fact, in a series of papers \cite{HL:I,HL:II,HL:III,HL:IV,huang:2005:verlinde,huang:rigidity}, the authors define a tensor product using analytic methods, denoted here $\hlotimes$ on $\Cc_V$, and they further show that this gives rise to a modular fusion category on $\Cc_V$. Our approach is independent of the work of Huang and Lepowsky. It crucially uses \cref{thm:VGisMF} which, as mentioned above, together with \cite[Theorem 7.17]{cyclic} naturally yields a ribbon Grothendieck-Verdier structure on $\Cc_V$. To verify that this actually is a modular fusion category one needs to prove \textit{rigidity}, which can be accomplished using \cite{etingof.penneys:2024:rigidity}. 
The non-degeneracy of the braiding can be proved by means of the factorization homology characterization of modular functors~\cite{brochierwoike}. 

It is natural to wonder whether in fact, the two structures are equivalent. As we discuss in \cref{sec:HL}, although one has isomorphisms $M \cbotimes N \cong M \hlotimes N$ for every $M, N \in \Cc_V$, it is not clear that these indeed realize an equivalence of modular categories. We expect that this is the case, but we leave the detailed analysis of this comparison to a future work and concentrate here on the implications of the geometrically constructed tensor structure in \cref{thm:modularfusion}.

\subsection{The 2dCFT/3dTFT correspondence}
One of the main ideas put forward by Witten in his seminal paper~\cite{Witten:1988hf} is a profound correspondence between two-dimensional conformal field theory and three-dimensional topological field theory.
The question of to what extent this correspondence is a rigorous mathematical result depends on the perspective:
If, as a starting point for the description of a conformal field theory, we choose a modular (fusion) category appearing in \cite{turaev} as an abstraction of the notion of modular data  in~\cite{mooreseiberg} (in that picture, a VOA is just a source for a modular category), then the work of Reshetikhin--Turaev~\cite{rt1,rt2,turaev} can be seen as a mathematically rigorous instance of the 2dCFT/3dTFT correspondence, at least at the level of state spaces:
In particular, this tells us how to build from a modular fusion category finite-dimensional vector spaces associated to surfaces that, in this particular approach, are called also spaces of conformal blocks~\cite[Section 2.4]{jfcs}. 

What is not covered in \cite{Witten:1988hf,rt1,rt2,turaev} is the treatment of \emph{full} conformal field theories, more precisely the construction of classification of consistent system of correlators for a conformal field theory whose monodromy data is given by a modular fusion category.
This was completely solved through the  \emph{FRS Theorem} of Fuchs--Runkel--Schweigert, partially with Fjelstad~\cite{frs1,frs2,frs3,frs4,ffrs,ffrsunique}. According to this result,  consistent systems of correlators arise from special symmetric Frobenius algebras in the modular fusion category. The result does not only ensure existence of the correlators; it is entirely constructive. In particular, partition functions can be can be calculated as link invariants~\cite{frs1}.

If however instead of modular categories, one puts the spaces of conformal blocks from \cite{FBZ:2004,DGT1,DGT2} front and center, Fuchs-Runkel-Schweigert point at a significant conceptual problem~\cite[Section~3.3]{csrcft}: The relation between the conformal blocks built from a modular fusion category via the Reshetikhin--Turaev construction (arising from the category of modules over a strongly rational VOA; those are the ones appearing in the FRS Theorem) and the spaces of conformal blocks that we consider here and arising from \cite{FBZ:2004,DGT1,DGT2} is not known.

Using \cref{thm:modularfusion}, we obtain the following result:

\begin{customthm}{\ref{thm:comparisonrt}}
Up to a contractible space of choices, the modular functor $\VV$ is the unique modular functor extending the modular fusion category $\Cc_V$ from genus zero to all surfaces. In particular, there is a canonical equivalence
\begin{align*}
    \VV \xrightarrow{\ \simeq \ } \Fk_{\Cc_V}
\end{align*}
of modular functors between $\VV$ and the Reshetikhin--Turaev type modular functor $\Fk_{\Cc_V}$ of the modular fusion category $\Cc_V$. This affords an extension of $\VV$ to a once-extended three-dimensional topological field theory.
\end{customthm}

To the best of our knowledge, this is the first 2dCFT/3dTFT correspondence genuinely native to the framework of rational VOAs, i.e.\ formulated for $\VV$. A comparison result in a similar spirit was achieved by Andersen and Ueno in \cite{andersenuenoinv} in the special case of affine Lie algebras. 

\Cref{thm:comparisonrt} has the important implication that the \emph{FRS Theorem}, i.e.\ the construction of correlation functions via the correspondence between two-dimensional conformal field theory and three-dimensional topological field theory, applies in a much more comprehensive way: It is actually applicable to the spaces of conformal blocks $\VV$ constructed directly from $V$.  
Since the FRS Theorem always guarantees non-trivial solutions for the correlation functions, it gives us solutions for the Knizhnik--Zamolodchikov equations of $V$. Actually, even more is true: By applying the FRS Theorem to $\Cc_V$ from \cref{thm:modularfusion}, we can describe \emph{all} systems of solutions compatible with gluing (see \cref{thm:correlators}).

It is important to note that this does not establish a comparison to what we would obtain by applying the FRS Theorem to the Huang--Lepowsky modular fusion category. 
But to profit from the FRS classification of correlators, we actually do not need this because of the extremely crucial fact that the FRS Theorem applies to \emph{any} modular fusion category, not just to modular fusion categories of a specific origin.

\subsection{On the
 proof of \texorpdfstring{Theorem~\ref{thm:VGisMF}}{the Main Theorem}}  The concept of modular functor that we consider (see \cref{def:MF}) is topological in nature, while conformal blocks from VOAs, as defined in \cite{FBZ:2004}, live in the algebro-geometric setting. Inspired by \cite{baki,deshpande.mukhopadhyay:2019}, in \cref{sec:MFbackground}, we describe the dictionary that is necessary to go from one perspective to the other. Let $V$ be a strongly rational VOA and, as above, let $\Cc_V$ be the category of admissible $V$-modules, which is equipped with an equivalence $\Cc_V^\op \to \Cc_V$ obtained by assigning to a module $M$ its contragredient $M'$. A modular functor on $\Cc_V$, should be given by a compatible way of assigning, to every stable pair $(g,n)$, and every $X_1, \dots, X_n \in \Cc_V$, a vector bundle on $\bTMgn$ with a projectively flat connection (with logarithmic singularities along the boundary $\TDelta_{g,n}$).  \Cref{thm:VGisMF} essentially boils down to the claim that the assignment
\[ X_1, \dots, X_n  \mapsto \VV_{g}(X_1, \dots, X_n)
\] satisfies explicit compatibilities corresponding to \textit{natural} operations between marked curves.

Let us unpack what we mean by this. 
The stacks $\bTMgn$ are connected to each other by \textit{tautological maps} which arise from forgetting some of the marked points, or gluing together curves along chosen marked points. These operations arise from the topological counterpart of \textit{capping boundary components} and \textit{sewing together} two boundary components respectively  (see also Figure~\ref{fig:teich}). 

Denote by $\xi_{n+1} \colon \bTM_{g,n+1} \to \bTM_{g,n}$ the map that forgets the datum of the $n+1$-st point, then \cite[Propagation of Vacua]{codogni:POV,DGT1} provides an isomorphism
\begin{equation}\label{eq:povintro} \xi_{n+1}^* \VV_g(X_1, \dots, X_n) \cong \VV_g(X_1, \dots, X_n, V)\end{equation} of sheaves on $\bTM_{g,n+1}$. However, this is not sufficient: Both sides of \eqref{eq:povintro} are twisted D-modules (i.e.\ vector bundles with a projectively flat connection) and one of the requirements that $\VV$ defines a modular functor is that the isomorphism \eqref{eq:povintro} is actually an isomorphism preserving this extra structure. We show this is the case in \cref{prop:pov} by a direct inspection of the definition of the projectively flat connection given in \cite{FBZ:2004,DGT1}. We note that, for this compatibility to hold, it is actually not necessary to assume that $V$ is rational.

The more involved requirement that $\VV$ has to satisfy, and where we heavily make use of the rationality of $V$, concerns the compatibility with the gluing map, analyzed in \cref{sec:gluing}. Describing this compatibility carefully in algebraic terms is a little more difficult and requires some notation. Given an element in $\bTMgn$, i.e.\ a curve with marked points and tangent vectors, one can naturally obtain a new curve by gluing together two distinct points. One can show that this operation actually gives rise to a map 
\[ \Sp \colon \{ \text{twisted D-modules over } \bTM_{g,n}\} \to \{ \text{twisted D-modules over } \bTM_{g-1,n+2}\}.
\] The required compatibility that $\VV$ has to satisfy is the datum of an \textit{isomorphism of twisted D-modules} 
\begin{equation} \label{eq:excisionintro} \Sp \, \VV_g(X_1, \dots, X_n) \cong \VV_{g-1}(\Delta, X_1, \dots, X_n)
\end{equation} over $\bTM_{g-1,n+2}$, where $\Delta \in \Cc_V \boxtimes \Cc_V$ is the \textit{coevaluation element} given by the contragredient equivalence $\Cc^\op \to \Cc$. We first construct such an isomorphism using the \cite[Sewing Theorem]{DGT2} and we then show that indeed this isomorphism is compatible with the projectively flat connection. In both these steps, a crucial ingredient is the rationality of $V$: in fact this implies that 	we can identify $\Delta$ with the finite sum $\bigoplus_{S \text{ simple }} S \otimes S'$, which makes it possible not only to construct an explicit map \eqref{eq:excisionintro}, but also to control its behavior with respect to the projective connection.

\subsection{Further directions} We conclude the introduction mentioning one natural question which we also briefly discuss in the last section of \cref{sec:final}. As pointed out, \cref{thm:VGisMF} assumes that $V$ is a strongly rational VOA, and the rationality condition plays a crucial role in the proof of \cref{thm:VGisMF}. However, imposing that $V$ is rational is very restrictive. For instance, in \cite{HLZ}, Huang, Lepowsky and Zhang show that even without the rationality assumption, the category of $V$-modules $\Cc_V$ can be endowed with a monoidal structure (which turns out to be Grothendieck--Verdier~\cite{alsw}). It is natural then to ask if sheaves of coinvariants induce a monoidal structure on $\Cc_V$ also without the rationality condition (see \eqref{qs:CBareE2algebras}).

\subsection{Plan of the paper} In \cref{sec:CBrecall} we give a quick overview of the definition of conformal blocks associated to VOAs, referring the reader to \cite{FBZ:2004,DGT1,DGT2} for detail. In \cref{sec:MFbackground} we review the notion of a modular functor from an operadic perspective and we explain how this can be described in algebro-geometric terms. \Cref{thm:VGisMF} is stated and proved in \cref{sec:CBareMF}. Finally, in \cref{sec:final} we describe the main consequences of \cref{thm:VGisMF}, such as \cref{thm:modularfusion} and \cref{thm:correlators}, and discuss future research directions and open questions.

\subsection{Acknowledgments.} We would like to warmly thank Christoph Schweigert for the many discussions and comments related to this project. Most of this project was carried out while CD was a visitor of the University of Hamburg as a Mercator Fellow: she thanks I. Runkel, C. Schweigert, and J. Teschner. Thanks also to Adrien Brochier, David Ben-Zvi, Jürgen Fuchs, Alexander Kirillov, Swarnava Mukhopadhyay and Lukas Müller for conversations and comments related to this project.

CD is supported by NSF DMS 2401420. 
LW gratefully acknowledges support by the ANR project CPJ n°ANR-22-CPJ1-0001-01 at the Institut de Mathématiques de Bourgogne (IMB). The IMB receives support from the EIPHI Graduate School (contract ANR-17-EURE-0002).

\section{Vertex operator algebras and associated conformal blocks} \label{sec:VOAs-CB}

Throughout the paper, $V$ will denote a vertex operator algebra (VOA for short) which is assumed to be $\NN$-graded, of CFT-type, $C_2$-cofinite, rational and self-contragredient. This in particular implies that $V$ is simple. Phrased differently, $V$ is a \textit{strongly rational} VOA. Throughout, by $\Cc_V$ we denote the category of admissible $\NN$-graded $V$-modules. We refer to \cite{FHL,Lepowsky.Li} as well as \cite{DGT2,DGT3} for more background on VOAs, $V$-modules and on the conditions imposed here. 

\subsection{Coinvariants and conformal blocks} \label{sec:CBrecall} Generalizing the constructions of Ben-Zvi--Frenkel and Nagatomo--Tsuchiya \cite{FBZ:2004, NT},  Damiolini, Gibney and Tarasca showed in \cite{DGT1, DGT2} that, starting with $n$ objects $X_1, \dots, X_n\in \Cc_V$, one can naturally define a vector bundle $\VV_g(X_1, \dots, X_n)$---called the \textit{sheaf of coinvariants}---on $\bTMgn$, the moduli stack parametrizing stable genus $g$ curves marked by $n$ points at each of which a non-zero tangent vector is fixed (this stack is denoted $\overline{\mathcal{J}}^{1,\times}_{g,n}$ in \cite{DGT1}).  This construction is obtained by descent from $\btMgn$, the moduli stack parametrizing stable genus $g$ curves marked by $n$ points and local coordinates.\footnote{We note that in \cite{DGT1} and \cite{DGT2}, two a priori different sheaves of coinvariants are defined. However in \cite{DGK2} it is shown that actually the two constructions are equivalent.}

We denote the fiber of $\VV_g(X_1, \dots, X_n)$ at the point $(C,P_\bullet, \tau_\bullet)$ of $\bTMgn$ by $\VV(X_\bullet)_{[C,P_\bullet, \tau_\bullet]}$ or simply by $\VV(X_\bullet)_{[C]}$ if $(P_\bullet, \tau_\bullet)$ are understood. We call these spaces the \emph{spaces of coinvariants} associated to $X_\bullet$ and to the point $(C,P_\bullet, \tau_\bullet)$ of $\bTMgn$. These spaces are defined as quotients of $X_1 \otimes \dots \otimes X_n$ by the action of a Lie algebra $\Lc_{C\setminus P^\bullet}(V)$ which takes into consideration both the geometry of $C$ and the action of $V$ on the modules $X_i$ (see \cite[Section 3]{DGT2} for further details). Spaces of \emph{conformal blocks} are naturally defined to be the dual of spaces of coinvariants and so they can be interpreted as spaces of functions $X_1 \otimes \dots \otimes X_n \to \CC$ which satisfy some constraints specified by $\Lc_{C\setminus P^\bullet}(V)$. 

\begin{remark} \begin{enumerate}[label=(\alph*)] 
\item When $V$ is rational and $C_2$-cofinite and the modules $X_\bullet$ are simple, then one can prove that the bundle $\VV_g(X_1, \dots, X_n)$ is independent of the non-zero tangent vectors and thus yields a vector bundle on $\bMgn$ (see \cite{DGT2}). For the purpose of this paper, however, we will always work over $\bTMgn$. 
\item We note that in the construction of the sheaves $\VV_g(X_1, \dots, X_n)$, one does not need that $V$ is $C_2$-cofinite or rational. However, without both these assumptions, we do not know whether $\VV_g(X_1, \dots, X_n)$ is locally free of finite rank. Assuming that $V$ is $C_2$-cofinite, but not necessarily rational, in \cite{DGK1} it is shown that the sheaves $\VV_g(X_1, \dots, X_n)$ are coherent. In \cite{DGK2} a sufficient condition to ensure that they are locally free is provided. \end{enumerate}
\end{remark}

\subsection{Projectively flat connection} \label{sec:connection}  By extending the work of \cite{FBZ:2004}, in \cite[Section 7]{DGT1} it is shown that the bundle $\VV_g(X_1, \dots, X_n)$ admits a projectively flat connection with logarithmic singularities along the boundary divisor $\TDelta_{g,n}$ parametrizing singular curves. This can be interpreted as a generalization of the Knizhnik--Zamolodchikov connection.

We set up some notation to explain this a little more carefully. If $\TMgn$ denotes the moduli stack parametrizing \textit{smooth} genus $g$ curves marked by $n$ points and with non-zero tangent vectors at each marked point, then $\TDelta_{g,n} \coloneqq \bTMgn\setminus \TMgn$. To every line bundle $\Lc$ on $\bTMgn$ and to any $\alpha \in \CC$, one naturally associates a central extension 
\begin{equation}\label{eq:atiyah} 0 \longrightarrow \Oc_{\bTMgn} \longrightarrow  \alpha \Ac_{\Lc} \longrightarrow \Tc_{g,n} \longrightarrow 0.
\end{equation} of the tangent bundle $\Tc_{g,n}$ of $\bTMgn$. (Here, and throughout, for $X$ a scheme (or a stack), $\Oc_X$ denotes the sheaf of rational functions on $X$.)
In more detail, when $\alpha \in \ZZ$ the sheaf $\alpha \Ac_{\Lc}$ can be interpreted as the sheaf of first order differential operators on the bundle $\Lc^{\otimes \alpha}$. This notion can naturally be extended to every $\alpha \in \CC$ (see \cite{Tsuchimoto}). By restricting this sequence to $\Tc_{g,n}(-\TDelta_{g,n}) \subset \Tc_{g,n}$,  the sheaf of tangent vectors to $\bTMgn$ which are tangent to the boundary divisor $\TDelta_{g,n}$, one obtains the sheaf of Lie algebras $\alpha \Ac_{\Lc}(-\TDelta_{g,n})$ and the exact sequence
\begin{equation}\label{eq:atiyahLog} 0 \longrightarrow \Oc_{\bTMgn} \longrightarrow  \alpha \Ac_{\Lc}(-\TDelta_{g,n}) \longrightarrow \Tc_{g,n}(-\TDelta_{g,n})\longrightarrow 0
\end{equation} analogous to \eqref{eq:atiyah}.
In \cite{DGT1} it is proved that indeed $\VV_g(X_1, \dots, X_n)$ is equipped with an action of 
\[ \dfrac{c}{2}\Ac_{\Lambda}(-\TDelta_{g,n}),
\]  where $c$ is the central charge of the VOA $V$ and where $\Lambda$ is the Hodge line bundle on $\bTMgn$. This connection arises from the fact that, by definition, each $V$-module is equipped with an action of the Virasoro algebra with central charge $c$. More details are given in \cref{subsec:pov}. We remark here that the existence of this connection and its identification with the action of this Atiyah algebra does not require the rationality of $C_2$-cofiniteness of $V$. We refer the reader to \cite[Section 7.1]{DGT1} and references therein for more details about Atiyah algebras. In this paper, we denote the category of vector bundles on $\bTMgn$ equipped with an action of $\alpha\Ac_{\Lambda}(-\TDelta_{g,n})$ by $(\alpha\Ac_{\Lambda}\text{-mod})(\bTMgn)$.

\subsection{Propagation of Vacua, Factorization, and Sewing} As illustrated in \cite{DGT2} it is shown that the sheaves of coinvariants satisfy natural compatibility conditions which resemble---but are a priori weaker---than those satisfied by a modular functor. In this paper we will show that, in fact, sheaves of coinvariants define a modular functor (see \cref{thm:VGisMF}). To do so, we will heavily rely on the properties known as Propagation of Vacua (see \cite[Theorem 3.6]{codogni:POV} and \cite[Theorem 6.2]{DGT1}) and the Sewing Theorem (\cite[Theorem 8.5.1]{DGT2}), which in turn can be interpreted as an enhancement of the Factorization Theorem (\cite[Theorem 7.0.1]{DGT2}). Before doing this, we will recall the notion of modular functor and return to the statement and proof of \cref{thm:VGisMF} in \cref{sec:CBareMF}.

\section{Modular functors} \label{sec:MFbackground}
Using the notion of a modular functor that builds, among other works, on \cite{ms89, turaev, tillmann, baki, jfcs}, one may describe consistent systems of representations of extensions of mapping class groups. 
In this text, we will use the definition from \cite{brochierwoike} that we recall in this section.

\subsection{Modular operads}\label{sec:modoperad} We will define modular functors via modular operads and their algebras that were introduced  by Getzler and Kapranov in \cite{gkmod}. 
More precisely, we will need modular operads with values in the symmetric monoidal bicategory $\Cat$ of small categories. 
Intuitively, a $\Cat$-valued modular operad associates to a natural number $n\ge 0$, a category of operations of total arity $n$, in such a way that (a long list of) natural compatibilities are satisfied.  A way to effectively encode these compatibilities is to think about $n$ as a graph having only one vertex and $n$ many legs.
One can then build a category out of such graphs and their disjoint unions. This description based on categories of graphs was given by Costello~\cite{costello} and adapted to a bicategorical framework in \cite[Section~2]{cyclic}.

We define  $\Graphs$ as the category having as objects finite disjoint unions of finite sets (which, as we see next, can be identified with appropriate graphs). 
Maps in this category are given by graphs as we explain now: 
Recall that a {graph} is a tuple \[\Gamma=(V,H, i\colon H \to H, s \colon H \to V)\] where $V$ and $H$ are finite sets of vertices and half edges, respectively, and $i \colon H\to H$ is an involution. Intuitively, the map $s$ sends a half edge to its source vertex, while $i$ maps a half edge to the half edges that it is glued to. The orbits of $i$ are called \emph{edges} of the graph. An orbit of size one is called a \emph{leg} while an orbit of size two is called an \emph{internal edge}. 

A graph with one vertex and finitely many legs attached to it is called a \emph{corolla}. We will therefore identify the objects of $\Graphs$ with finite disjoint unions of corollas. 
\begin{wrapfigure}{r}{66mm}
\centering
\begin{tikzpicture}[scale=0.27]
	\begin{pgfonlayer}{nodelayer}
		\node [circle,fill,inner sep=1.3pt] (0) at (-4, 4) {};
		\node [circle,fill,inner sep=1.3pt] (1) at (-4, 0) {};
		\node [style=none] (2) at (-4, 7) {};
		\node [style=none] (3) at (-6, -2) {};
		\node [style=none] (4) at (-2, -2) {};
		\node [style=none] (5) at (-7, 6) {};
		\node [style=none] (6) at (-4, -3) {$\Gamma$};
		\node [circle,fill,inner sep=1.3pt] (7) at (2.75, 4) {};
		\node [circle,fill,inner sep=1.3pt] (8) at (2.75, 0) {};
		\node [style=none] (9) at (2.75, 7) {};
		\node [style=none] (10) at (0.75, -2) {};
		\node [style=none] (11) at (4.75, -2) {};
		\node [style=none] (12) at (1.5, 6) {};
		\node [style=none] (13) at (0, 4.75) {};
		\node [style=none] (14) at (2.75, 2.5) {};
		\node [style=none] (15) at (2.75, 1.5) {};
		\node [style=none] (16) at (4, 2.5) {};
		\node [style=none] (17) at (4, 1.5) {};
		\node [style=none] (18) at (2.75, -3) {$\nu(\Gamma)$};
		\node [circle,fill,inner sep=1.3pt] (19) at (9.5, 2.5) {};
		\node [circle,fill,inner sep=1.3pt] (20) at (9.5, 2.5) {};
		\node [style=none] (21) at (9.5, 7) {};
		\node [style=none] (22) at (7.5, -2) {};
		\node [style=none] (23) at (11.5, -2) {};
		\node [style=none] (24) at (9.5, -3) {$\pi_0(\Gamma)$};
	\end{pgfonlayer}
	\begin{pgfonlayer}{edgelayer}
		\draw [bend right=90, looseness=1.25] (1) to (0);
		\draw (0) to (1);
		\draw (0) to (2.center);
		\draw [in=405, out=105] (0) to (5.center);
		\draw [in=165, out=-135, looseness=1.25] (5.center) to (0);
		\draw (1) to (3.center);
		\draw (1) to (4.center);
		\draw (7) to (9.center);
		\draw (8) to (10.center);
		\draw (8) to (11.center);
		\draw (7) to (12.center);
		\draw (7) to (13.center);
		\draw (7) to (14.center);
		\draw (7) to (16.center);
		\draw (8) to (15.center);
		\draw (8) to (17.center);
		\draw (19) to (21.center);
		\draw (20) to (22.center);
		\draw (20) to (23.center);
	\end{pgfonlayer}
\end{tikzpicture} 
\label{figgraphs}
\end{wrapfigure}  

    In order to define the morphisms of $\Graphs$, we first introduce two operations that associate to a graph, a disjoint union of corollas. Given a graph $\Gamma$, we define 
\begin{itemize} 
    \item $\nu(\Gamma)$ to be the graph obtained by cutting $\Gamma$ at all edges;
    \item $\pi_0(\Gamma)$ to be the graph obtained by contracting all internal edges of $\Gamma$.
\end{itemize} 

A morphism between two corollas $T \to T'$ is given by an equivalence class of triples $(\Gamma, \varphi_0, \varphi_1)$, where $\Gamma$ is a graph and where $\varphi_i$ are isomorphisms 
$\varphi_0 \colon T \cong \nu(\Gamma)$ and $\varphi_1 \colon T' \cong \pi_0(\Gamma)$. We refer to \cite{cyclic} for more details on the equivalence relation and on the composition of such maps.

Disjoint union endows $\Graphs$ with a symmetric monoidal structure. We will see $\Graphs$ as a symmetric monoidal bicategory whose only $2$-morphisms are identities.

\begin{exx} \label{exxsn} Denote, for every $n \ge 0$, the corolla $T_{n-1}$ with $n$ legs. One deduces from the definition of $\Graphs$ that the permutation group $S_n$ on $n$ letters acts on $T_{n-1}$.
\end{exx}

We now have the language to define a {modular operad} which, in this paper, we will always assume to be valued in $\Cat$, the symmetric monoidal bicategory having as objects (essentially small) categories, functors as 1-morphisms and natural transformations as 2-morphisms.

\begin{definition} \label{def:modoperad} A \emph{($\Cat$-valued) modular operad} is a symmetric monoidal functor
\begin{equation*}
\cat{O} \colon \Graphs \to \Cat,
\end{equation*}
where, as mentioned above, $\Graphs$ and $\Cat$ are both interpreted as symmetric monoidal bicategories, that comes equipped with an operation $1_\cat{O}\in \cat{O}(T_1)$ of total arity two, called \emph{operadic identity}, that behaves neutrally with respect to operadic composition up to coherent isomorphism
and comes equipped with the structure of a homotopy fixed point with respect to the homotopy involution on $\cat{O}(T_1)$ as specified in \cite[Definition~2.3]{cyclic}.  \end{definition}

\begin{remark}
In particular, $\cat{O}$ assigns to every corolla $T$ a category $\cat{O}(T)$.
If $T_n$ is a corolla with $n+1$ legs for some $n\ge -1$, then $\cat{O}(T_n)$ is the category of $n$-ary operations, i.e.\ operations with $n$ inputs and one output. In the context of modular operads, the distinction of input versus output is not made however; both are treated on the same footing.
Therefore, one can refer to $\cat{O}(T_n)$ as the category of operations of total arity $n+1$.
To every graph, seen as a morphism $\Gamma:T\to\ T'$ in $\Graphs$, the modular operad $\cat{O}$ associates a functor $\cat{O}(\Gamma) \colon \cat{O}(T)\to \cat{O}(T')$. Since $\cat{O}$ is required to be a symmetric monoidal functor, this assignment has to satisfy appropriate compatibility conditions that we will not spell out here. We should also point out that a symmetric monoidal functor between symmetric monoidal bicategories is always to be understood in a weak sense, i.e.\ up to coherent isomorphism.
The necessary framework for this is set up in \cite[Chapter~2]{schommerpries}, see the explanations after \cite[Definition~2.2]{cyclic} for more details and the connection to more traditional descriptions of (modular) operads.  
\end{remark}

\subsection{The modular surface operad} The main example of a modular operad that informed the invention of the concept in \cite{gkmod} in the first place is the \emph{surface operad} \[\Surf \colon \Graphs \to \Cat. \] For the category-valued version of this operad needed in this section, we refer e.g.\ to \cite[Section~7.3]{cyclic}. 
Let us outline the definition here: For a corolla $T$, the category $\Surf(T)$ is the groupoid whose objects are connected surfaces $\Sigma$ (surface means for us always oriented, smooth, two-dimensional manifold, possibly with boundary) equipped with an orientation-preserving diffeomorphism $\sqcup_{\Legs(T)} \SS^1 \ra{\cong} \partial \Sigma$ that is referred to as \emph{boundary parametrization}.  Morphisms in $\Surf(T)$ are isotopy classes of diffeomorphisms preserving the orientation and the boundary parametrization. In particular, the automorphism group of a surface $\Sigma \in \Surf(T)$ is the pure mapping class group of $\Sigma$; we refer to~\cite{farbmargalit} for a textbook introduction. 
The operadic composition is given by gluing along boundary circles.

\begin{remark} \label{rmk:operad vs tower} The reader familiar with \cite{baki} will note a certain similarity between the notion of a modular operad (with values in $\Gpd$ rather than $\Cat$), and that of a \emph{tower of groupoids}. 
Although not all the required compatibilities are spelled out in \cite{baki}, these two concepts \textit{morally} correspond to one another. In particular, many arguments of \cite{baki} phrased for towers of groupoids can be adapted and applied to modular operads. Under this correspondence, the operad $\Surf$ corresponds to the tower of groupoids $\piM$ given in \cite[Section 5.6]{baki}.  
Note however the warning in Remark~\ref{remmfdef} that applies once we pass to representations of these topological structures.
\end{remark}

\begin{remark} \label{rmk:Surf=TM} We note that for every corolla $T$, one has a decomposition \[\Surf(T)=\bigsqcup_{g \in \NN} \Surf_g(T),\] 
where $\Surf_g(T)\subset \Surf(T)$ is the full subgroupoid spanned by surfaces of genus $g$. (Note however that $\Surf_g$ is \emph{not} a modular operad!) For the  corolla $T_{n-1}$ with $n$ legs, denote $\Surf_g(T_{n-1})$ by $\Surf_{g,n}$. Whenever $(g,n)$ is a \textit{stable pair}, i.e.\ different from $(0,0)$, $(0,1)$ and $(1,0)$, then $\Surf_{g,n}$ is equivalent to the groupoid $\Pi_1(\TMgn)$. This is the fundamental groupoid of $\TMgn$, the stack whose objects are smooth and projective curves of genus $g$ with $n$ marked points and non-zero tangent vectors (see \cite[Theorem 6.1.13]{baki} and also \cite[Theorem 14.4]{deshpande.mukhopadhyay:2019}). Denote by $\piM$ the collection of groupoids $\Pi_1(\TMgn)$ varying over stable $(g,n)$. This corresponds to an appropriate restriction of the operad $\Surf$ which we call $\Surf^{\mathrm{stab}}$, in analogy with the notation $\Teich^{\mathrm{stab}}$ used in \cite{baki}
(but note that $\Surf^{\mathrm{stab}}$ is not a modular operad).

As detailed in \cite[Sections 6.2]{baki}, one naturally associates to every graph $\Gamma \colon T \to T'$, a \textit{gluing} functor $\piM(\Gamma) \colon  \piM(T) \to \piM(T')$. The definition of this morphism uses the fact that, to $\Gamma$ and to a curve belonging to $\piM(T)$, one can naturally associate a nodal curve whose dual graph is $\Gamma$. To such a curve (and remembering the data of the tangent vectors), one can naturally associate a smooth curve which will then be an object of $\piM(T')$. 

This procedure does not work if $(g,n)$ is not stable. However, there is only one piece of datum that is missing if one uses $\Surf^{\mathrm{stab}}$---or equivalently $\piM$---rather than whole operad $\Surf$: The operation of filling punctures. 
Let $T_0$ be the corolla with one leg. The groupoid $\Surf_0(T_0)$ is actually trivial: 
Its unique object up to isomorphism
is the unit disk $\Dd$ and the identity is its only automorphism. Given an object $T$ of $\Graphs$ and one of its legs $\ell$,
we can consider the object $T(\ell)$ of $\Graphs$ obtained by removing the leg $\ell$ from $T$. To this operation, there is a natural map of groupoids $\TM_{f_\ell} \colon \piM(T) \to \piM(T(\ell))$ which contracts the $\ell$-th point.

We also observe that the map $\Gamma_\ell \colon T_0 \sqcup T \to T(\ell)$, given by the graph obtained gluing the leg of $T_0$ with the leg $\ell$ of $T$ to form an edge, induces the analogue map between surfaces. Namely, for every $\Sigma \in \Surf(T)$, one obtains that  $\Sigma_\ell=\Surf(\Gamma_\ell)(\Dd \sqcup \Sigma)$ is given by $\Sigma$ by \textit{capping} the $\ell$-th boundary component with the disk $\Dd$. 

This will allow us to treat the modular operad $\Surf$ and the tower of groupoids $\piM$ as essentially equivalent concepts, at least for arity-wise arguments, see  Figure~\ref{fig:teich} for a visualization.

\begin{figure*}[h!]
\centering
\includegraphics[width=\textwidth]{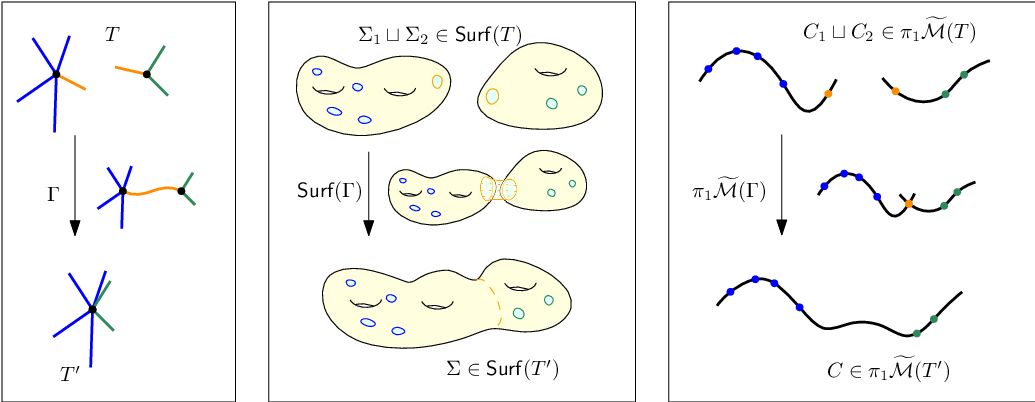}
\caption{Illustration of $\Surf$ and of $\piM$.}
\label{fig:teich}
\end{figure*}

\end{remark}

\subsection{Modular algebras over modular operads\label{secmodularalg}} Given a modular operad $\cat{O}$---such as $\Surf$---we can define algebras over it with values in any symmetric monoidal bicategory $\cat{S}$.
The case relevant for this article is $\cat{S}=\Rexf$, the symmetric monoidal bicategory of \emph{finite categories}~\cite{etingofostrik} over $\CC$. The objects of $\Rexf$ are linear abelian categories with finite-dimensional morphism spaces, finitely many simple objects up to isomorphism, enough projective objects and finite length for every object, 1-morphisms are right exact functors and 2-morphisms are linear natural transformations. The Deligne product $\boxtimes$ is the symmetric monoidal product,
and the category $\vect$ of finite-dimensional $\CC$-vector spaces is the monoidal unit. 

For $\Cc\in\Rexf$, consider a \emph{non-degenerate symmetric pairing} $\kappa \colon \Cc\boxtimes\Cc\to \vect$. 
Here the \emph{non-degeneracy} means that  $\kappa$ exhibits $\Cc$ as its own dual in the homotopy category of $\Rexf$; in more detail, there is a right exact functor $\Delta \colon \vect\to\Cc\boxtimes\Cc$ satisfying appropriate compatibilities with $\kappa$.
The \emph{symmetry} of $\kappa$ means that $\kappa$ is equipped with the structure of a homotopy fixed point structure for the $\mathbb{Z}_2$-action on maps $\Cc\boxtimes\Cc\to\vect$ coming from the symmetric braiding of $\boxtimes$.

As shown in \cite[Proposition 2.12]{cyclic}, associated to $(\Cc, \kappa)$, one naturally defines the modular operad
$\EndAk$ by sending a corolla $T$ with $n$ legs to the category
\[\End^\Cc_\kappa(T):=\Rexf(\Cc^{\boxtimes n},\vect).\]
The pairing $\kappa$, or rather its associated map $\Delta \colon \vect \to \Cc \boxtimes \Cc$, is used to define the functor $\EndAk(\Gamma)$, so that one obtains indeed a modular operad.
We can further interpret $\Delta$ as an object of $\Cc\boxtimes\Cc$, the so-called \emph{coevaluation object}, which coincides with the \emph{end} $\Delta = \int_{X\in\Cc} DX \boxtimes X$ (see \cite[Section~2.2]{fss} for an introduction to (co)ends in finite linear categories).

\begin{definition} A \emph{modular $\cat{O}$-algebra valued in $\Rexf$} is an object $\Cc\in\Rexf$ equipped with a non-degenerate symmetric pairing $\kappa$ and a morphism \[\Fk\colon \cat{O}\to \End_\kappa^\Cc\] of modular operads, i.e.\ a symmetric monoidal natural transformation of symmetric monoidal functors $\Graphs\to\Cat$ preserving operadic identities up to coherent isomorphism. \end{definition}

For the more detailed version of this definition, we refer to \cite[Section~2.4]{cyclic}. Let us partly unpack  this in the case $\cat{O}=\Surf$. For $T_{n-1}$, the corolla with $n$ legs and $\Sigma \in \Surf(T_{n-1})$, a modular $\Surf$-algebra
$\Fk\colon \cat{O}\to \End_\kappa^\Cc$ assigns a right exact functor \[\Fk(\Sigma;-)\colon \Cc^{\boxtimes n}\to\vect\] carrying a representation of the mapping class group $\Map(\Sigma)$ through natural automorphisms. These assignments need to satisfy appropriate compatibilities. We are going to focus on two of them.

For $X_1,\dots,X_n \in \Cc$ (to be thought of as boundary labels for the $n$ boundary components of $\Sigma$), the vector space $\Fk(\Sigma;X_1,\dots,X_n)$ is a representation of the mapping class group of $\Sigma$. Suppose that $\Sigma'$ is obtained by gluing two boundary circles of another surface $\Sigma$ together, then the modular $\Surf$-algebra structure gives us an isomorphism
\begin{equation}
\Fk(\Sigma' ; -)\cong \Fk(\Sigma;\Delta,-), \label{eqnexcision}
\end{equation}
where $\Delta \in \Cc\boxtimes\Cc$ occupies the two slots affected by the gluing, without loss of generality the two first ones. 
This is sometimes referred to as \emph{excision} or \emph{factorization}. The isomorphism is $\Map(\Sigma)$-equivariant, with the $\Map(\Sigma)$-action on
$\Fk(\Sigma' ; -)$ obtained via restriction along the group morphism $\Map(\Sigma)\to\Map(\Sigma')$. 
If $\Cc$ is semisimple --- let us denote the finitely many simple objects by $X_i$---then $\Delta \cong \bigoplus_i DX_i \boxtimes X_i$, which reduces~\eqref{eqnexcision} to
\begin{equation}
    \Fk(\Sigma' ; -)\cong \bigoplus_i \Fk(\Sigma;DX_i,X_i,-) \ . \label{eqnexcisionSS}
\end{equation}

A special case of the above occurs when $\Sigma$ is the disjoint union of the disk $\Dd$ and of another surface $\Sigma''$. Suppose that $\Sigma'$ is obtained by gluing the boundary circle of $\Dd$ with one boundary circle of $\Sigma''$. In this special case, and assuming that $\Cc$ is semisimple,  \eqref{eqnexcisionSS} becomes
\[     \Fk(\Sigma' ; -)\cong \bigoplus_i \Fk(\Dd, DX_i) \otimes \Fk(\Sigma'';X_i,-) \ .
\] In particular, by setting $E \coloneqq \bigoplus_i\Fk(\Dd, DX_i) \otimes  X_i \in \Cc$,  we obtain
\begin{equation} \Fk(\Sigma' ; -)\cong \Fk(\Sigma''; E, -). \label{eq:pov}
\end{equation} Note that, by construction, $\Sigma''$ arises from $\Sigma'$ by adding an extra boundary component. 
Therefore (an iterative application of) \eqref{eq:pov} tells us that the value of $\Fk$ does not change if the surface it depends on is modified adding more boundary components to which the element $E$ is attached.

\begin{remark} \label{rmk:list} We list a series of observations that will be used in \cref{sec:CBareMF}.\begin{enumerate}[label={(\alph*)}]
\item \label{rmk:povextra} We observe that, when considering $\Pi_1(\bTM)$ in place of $\Surf$ to define the concept of modular algebra, then condition \eqref{eq:pov} needs to be added separately. In fact, as we already discussed, in $\Pi_1(\bTM)$ only stable pairs $(g,n)$ are allowed. Therefore the compatibilities with the operation of capping a puncture, translated here into forgetting one of the marked points, need to be imposed as a separate requirement. As we will see in \cref{subsec:pov}, this is related to the property called \textit{propagation of vacua}.
\item As in \cref{rmk:operad vs tower}, one might compare the notion of modular $\cat{O}$-algebra valued in $\Rexf$ with the one of \textit{representation} of a tower of groupoids \cite[Definition 5.6.12]{baki}. Although the two notions slightly differ (e.g., we only allow right exact functors, while no such restriction is in place in \cite{baki}; moreover, \cite{baki} does seem
to consider many coherence conditions one needs to bicategorical operadic algebras), one can deduce properties of $\cat{O}$-algebras valued in $\Rexf$ using the properties of representations of tower of groupoids described in \cite{baki} and adapt many arguments of \cite{baki} to our situation (at least for arity-wise arguments), see however Remark~\ref{remmfdef}.
\item \label{rmk:Dmod}
    Using the relations between $\Surf$ and $\piM$ described in \cref{rmk:Surf=TM}, together with the Riemann--Hilbert correspondence, we can give an alternative description of a $\Surf$-algebra in the algebro-geometric setting (see also \cite[Theorem 6.4.2]{baki}). For every stable pairs $(g,n)$, and for every $X_1, \dots, X_n \in \Cc$, a $\Surf$-algebra $\Fk$ naturally assigns a vector bundle of finite rank $\Fk_{g}(X_1, \dots, X_n)$ on $\bTMgn$ with a flat connection having logarithmic singularities along the boundary $\TDelta_{g,n}=\bTMgn \setminus \TMgn$. This system of logarithmic D-modules (varying $g$, $n$ and $X_i$), must satisfy a series of compatibilities analogous to those described in \eqref{eqnexcision} and \eqref{eq:pov}.
    \end{enumerate} 
\end{remark}

\subsection{Modular functors}
In an overly simplified way, modular functors (with values in $\Rexf$)
could be thought of as $\Rexf$-valued modular $\Surf$-algebras. This, however, is not correct because it does not yet take the \emph{framing anomaly} into account. This framing anomaly also plays an important role for the definition of the moduli space of modular functors in \cite{brochierwoike}. In other words, without a proper treatment of the framing anomaly, one does not arrive at the correct notion of maps between modular functors.

\begin{remark}\label{[Origin of the anomaly for VOAs]}\label{remanomaly} To understand this subtlety, consider the following case. Let $\Cc=\Cc_V$ be the category of admissible modules for a $C_2$-cofinite and rational VOA.  Then, as mentioned in \cref{sec:VOAs-CB}, one can associate to $X_1, \dots, X_n \in \Cc_V$ a vector bundle of finite rank $\VV_g(X_1, \dots, X_n)$ on $\bTMgn$. When $g=0$, then this vector bundle is equipped with a flat connection (with logarithmic singularities along $\TDelta_{0,n}$). In fact, in this situation the line bundle $\Lambda$ is trivial and therefore the associated sequence \eqref{eq:atiyah} splits. However, as soon as $g \geq 1$ and the central charge $c$ of $V$ is non zero, then \eqref{eq:atiyah} is non split (for $\alpha=c/2$ and $\Lc=\Lambda$), and therefore the $\VV_g(X_1, \dots, X_n)$ admits only a \textit{projectively} flat connection (with logarithmic singularities along $\TDelta_{g,n}$). In other words, $\Tc_{g,n}(-\TDelta_{g,n})$ only acts on $\VV_g(X_1, \dots, X_n)$ projectively.
\end{remark} 

Because of the framing anomaly, it is not the mapping class group that acts on the spaces of conformal blocks, but an extension of the mapping class group. This can be included through the following definition (see \cite[Definition~3.5]{brochierwoike}, building on concepts in \cite{Segal,ms89,turaev,tillmann,baki}):

\begin{definition} \label{def:MF} 
A \emph{modular functor} is a pair $(\cat{Q},\Fk)$ of
\begin{itemize}

\item an extension $\cat{Q}$ of $\Surf$ in the sense of \cite[Section~3.1]{brochierwoike}, i.e.\ a modular operad $\cat{Q}$ with a map $\cat{Q}\to\Surf$ of modular operads with connected homotopy fiber, equipped with a section over genus zero, and satisfying an insertion of vacua property; 
\item a modular $\cat{Q}$-algebra $\Fk$ valued in $\Rexf$.
\end{itemize}
\end{definition}

\begin{remark}\label{remmfdef}
While this definition from \cite{brochierwoike} is informed by the classical definitions
(which already mutually disagree, see \cite{henriques} for this well-known problem), it is logically independent and formulates all axioms in a coherent way using the language of modular operads in symmetric monoidal biategories.
The theory of symmetric monoidal bicategories is developed rigorously in \cite{schommerpries}, and earlier definitions, most notably \cite{turaev,baki}, generally do not take such coherence isomorphisms into account, at least not comprehensively. Moreover, as far as the extensions of the mapping class groups are concerned (to the extent to which this point is actually even formalized), many older definition are directly tailored towards modular categories. The extensions in \cite{brochierwoike} satisfy topological requirements, but the definition is agnostic to the origin of the extensions. Finally, let us note that for the precise relation of \cite{turaev,baki} to \cite{brochierwoike}, it is not even clear how to formulate a comparison statement: Clearly, the definitions do not agree on the nose, so we could ask, at best, for a correspondence between modular functors \emph{up to equivalence of modular functors}. While in \cite{brochierwoike} there is actually a notion of equivalence of modular functors based on equivalences of modular algebras in bicategories~\cite[Section~2.4]{cyclic} and through localization at maps comparing different extensions \cite[Section~3.2]{brochierwoike}, a corresponding notion of equivalence of modular functors in~\cite{baki} that would allow for instance to relate modular functors over different extensions does not seem to be available.
\end{remark}

\section{The modular functor from coinvariants} \label{sec:CBareMF}

In this section we will show that sheaves of coinvariants associated to modules of a $C_2$-cofinite and rational VOA $V$ define a modular functor. As defined in \cref{def:MF}, we need an extension $\cat{Q}$ of $\Surf$ and a $\cat{Q}$-algebra valued in $\Rexf$.

\subsection{The extension} The extension of $\Surf$ that defines the modular functor in this context is part of a family of extensions $\Surf^\alpha$ depending on a scalar $\alpha \in \CC$. When the scalar is un-specified, this is denoted $\widetilde{\Surf}$ in \cite{deshpande.mukhopadhyay:2019} and $\widetilde{\Teich}$ in \cite{baki}. Instead of explaining the details of this extension, we describe what are the key features of a $\Surf^\alpha$-algebra valued in $\Rexf$. Using the perspective outlined in \cref{rmk:list}~\ref{rmk:Dmod}, if a $\Surf$-algebra gives rise to a family of compatible D-modules over $\bTMgn$ (with logarithmic singularities), a $\Surf^\alpha$-algebra gives rise to a family of \textit{twisted} D-modules over $\bTMgn$  (with logarithmic singularities). The \textit{twist} is specified in this case by the Hodge line bundle $\Lambda$ and the scalar $\alpha$. 

More explicitly, for every stable pair $(g,n)$, and for every $X_1, \dots, X_n \in \Cc$, a $\Surf^\alpha$-algebra $\Fk$ naturally assigns a vector bundle of finite rank $\Fk_{g}(X_1, \dots, X_n)$ on $\bTMgn$ equipped with an action of the Atiyah algebra $\alpha \Ac_\Lambda(-\TDelta_{g,n})$, i.e.\ an $\alpha\Ac_\Lambda(-\TDelta_{g,n})$-module of finite rank. We refer the reader to \cite[Section 7.1]{DGT1} and references therein for more details about Atiyah algebras.

The axioms that $\Fk$ is an algebra over $\Surf^\alpha$ imposes that, along with the collection of these bundles, one also needs to record  natural isomorphisms between these twisted D-modules as observed for instance in \eqref{eqnexcision} and \eqref{eq:pov}.

\subsection{The modular algebra}  Let $V$ be a $C_2$-cofinite and rational VOA and denote by $c$ its central charge. As in \cref{sec:VOAs-CB}, we also assume that $V$ and self-dual, i.e.\ that $V$ is \textit{strongly rational}. Let $\Cc_V$ be the category of admissible  $\NN$-graded $V$-modules. Assigning to a $V$-module $M$ its contragredient $M'$ gives rise to a non-degenerate pairing $\kappa$ on $\Cc_V$ which becomes symmetric through the canonical isomorphisms $M\cong M''$. The pair $(\Cc_V, \kappa)$ will be the category with non-degenerate symmetric pairing underlying our modular functor. 

We define the functor $\VV$ as the one associating, to every stable pair $(g,n)$, and every $X_1, \dots, X_n \in \Cc_V$, the sheaf of coinvariants $\VV_{g}(X_1, \dots, X_n)$ on $\bTMgn$.

\begin{remark} \label{rmk:g0} We note that the definition of $\VV_{g}(X_1, \dots, X_n)$ of \cite{FBZ:2004, DGT1,DGT2} does require that $n \geq 1$, but does not actually need that $(g,n)$ is stable. However, when $n=0$, one can follow the procedure described in \cite[Section 4.2, Equation (30)]{DGT1} and still obtain a well defined sheaf $\VV_g$. We will provide further detail about this in \cref{subsec:pov}.
\end{remark}

The main result of this paper is the following theorem.

\begin{theorem} \label{thm:VGisMF} Let $V$ be a VOA which is strongly rational (i.e.\ of CFT-type, $C_2$-cofinite, rational and self-contragredient). Then $(\Surf^{c/2},\VV)$ defines a modular functor.     
\end{theorem}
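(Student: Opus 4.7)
The proof requires promoting $\VV$ to a morphism of modular operads $\VV \colon \Surf^{c/2} \to \End^{\Cc_V}_\kappa$ valued in $\Rexf$. The underlying datum is immediate: strong rationality ensures that $\Cc_V$ is finite semisimple, hence $\Cc_V \in \Rexf$; the contragredient functor $M \mapsto M'$, together with the canonical isomorphism $M \cong M''$ for admissible modules of a strongly rational VOA, provides a non-degenerate symmetric pairing $\kappa$; and rationality makes the coevaluation object a finite direct sum $\Delta \cong \bigoplus_{S \text{ simple}} S \boxtimes S'$ in $\Cc_V \boxtimes \Cc_V$. Arity-wise, for each stable $(g,n)$ and each $X_\bullet \in \Cc_V^{\boxtimes n}$, the $\Surf^{c/2}$-algebra structure demands a right-exact functor to $\vect$ carrying a representation of the $c/2$-extension of the corresponding mapping class group. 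This is already built in by \cref{sec:connection}: the sheaf $\VV_g(X_\bullet)$ carries an action of $\dfrac{c}{2}\Ac_\Lambda(-\TDelta_{g,n})$ and hence, via Riemann--Hilbert and \cref{rmk:list}, a projective representation of $\pi_1(\TMgn)$ with precisely the anomaly recorded by $\Surf^{c/2}$.

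What remains is to furnish the operadic composition data and verify its coherence. Using \cref{rmk:Surf=TM}, morphisms in $\Graphs$ are generated by two curve-level operations: filling a puncture and sewing two marked points. The first operation yields the capping structure map via \emph{propagation of vacua}: the isomorphism $\xi_{n+1}^* \VV_g(X_\bullet) \cong \VV_g(X_\bullet, V)$ of \cite{codogni:POV, DGT1}. By direct inspection of the construction of the projectively flat connection in \cite{FBZ:2004, DGT1}, I expect this isomorphism to be horizontal, i.e.\ an isomorphism of $\dfrac{c}{2}\Ac_\Lambda(-\TDelta)$-modules; crucially, this step does not require rationality. The second operation yields the sewing map via the \cite[Sewing Theorem]{DGT2}: given a curve in $\bTM_{g-1,n+2}$, one identifies the last two marked points to form a node and smooths back out. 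The Sewing Theorem delivers a canonical isomorphism of sheaves
\[
\Sp \, \VV_g(X_\bullet) \cong \VV_{g-1}(\Delta, X_\bullet)
\]
over $\bTM_{g-1,n+2}$. Here rationality is indispensable: it makes $\Delta \cong \bigoplus_S S \boxtimes S'$ a finite sum, so that the sewing isomorphism can be built summand-by-summand via the formal $q$-expansion along the node, with each summand $S \boxtimes S'$ encoding the contribution of states inserted at the two sides of the node.

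The main obstacle is verifying that this sewing isomorphism is horizontal for the projective connection. My plan is to analyze the action of $\dfrac{c}{2}\Ac_\Lambda(-\TDelta_{g-1,n+2})$ on a formal neighborhood of the nodal boundary using the sewing coordinates of \cite{DGT2}, comparing on both sides the Virasoro actions that produce the connection and accounting for the $L_0$-grading that governs the smoothing parameter $q$. Rationality enters here a second time through its control on the $L_0$-spectrum of each simple $S$, which yields the convergence of the relevant $q$-expansions and compatibility with the logarithmic structure along $\TDelta$. Once both compatibility isomorphisms are in place as horizontal maps, the remaining axioms---associativity of successive sewings, the $S_n$-equivariance inherent in the definition of $\VV_g$ under relabeling, compatibility between capping and sewing, and the homotopy fixed point structure for the operadic identity $1_{\End^{\Cc_V}_\kappa}$ realized by $V \in \Cc_V$---follow from the explicit curve-level origin of the operations and the naturality of the Sewing Theorem and of Propagation of Vacua. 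This assembles the required morphism of modular operads $\Surf^{c/2} \to \End^{\Cc_V}_\kappa$, i.e.\ the modular functor.
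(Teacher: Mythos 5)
Your proposal follows essentially the same route as the paper: the pairing $\kappa$ from the contragredient, the identification $\Delta\cong\bigoplus_S S\boxtimes S'$ via rationality, propagation of vacua for the capping maps, and the Sewing Theorem of \cite{DGT2} for the gluing maps, with the observation that all morphisms of $\Graphs$ are generated by these two operations. The strategy and the role of each hypothesis are correctly identified.

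However, the two verifications that constitute the actual mathematical content of the proof are left as declarations of intent (``I expect this isomorphism to be horizontal'', ``my plan is to analyze\dots''), and one of them hides a subtlety worth naming. For propagation of vacua, the paper's argument is short but concrete: after presenting the Atiyah algebra action through the quotient $n\Vir_S$ of $n$ copies of the Virasoro algebra with identified centers, the connection at the extra point $Q$ can be computed through the subalgebra $n\Vir_S\oplus\Vir_S^+$ with $\Vir_S^+\cong\Oc_S[\![z]\!]\partial_z$, and compatibility reduces to the fact that $\Vir_S^+$ is spanned by the $L_i$ with $i\geq -1$, all of which annihilate the vacuum $\mathbf{1}$. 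For the sewing step, the isomorphism induced by tensoring with $\ee_S(q)=\sum_d \ee_{S,d}q^d$ is \emph{not} horizontal on the nose: the paper's Lemma~\ref{lem:key} establishes the relation
\[
(q\partial_q)\circ\EE_S^{(n)}=\EE_S^{(n)}\circ q\partial_q - w_S\cdot\EE_S^{(n)},
\]
where $w_S$ is the conformal weight of the simple module $S$. The shift by $w_S$ is exactly what is needed for compatibility with the logarithmic twisted connection along $\TDelta$, and it is computed by lifting $q\partial_q$ to tangent vectors $\alpha_\pm z_\pm\partial_{z_\pm}$ at the two branches of the node with $\alpha_++\alpha_-=1$, so that $q\partial_q$ acts on $e_d\otimes\epsilon_d$ by $-(d+w_S)$, cancelling the $L_0$-grading of $\ee_S(q)$ except for the constant $w_S$. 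Your sketch of ``accounting for the $L_0$-grading'' points in the right direction, but without this computation the claim of compatibility is not established, and asserting plain horizontality would be slightly wrong as stated. With these two computations supplied, your argument coincides with the paper's.
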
 

As recalled in \cref{sec:CBrecall}, under the assumptions of \cref{thm:VGisMF}, the sheaves $\VV_{g}(X_1, \dots, X_n)$ are indeed vector bundles or finite rank over $\bTMgn$ which are naturally equipped with an action of $\frac{c}{2}\Ac_\Lambda(-\TDelta_{g,n})$ \cite{DGT1,DGT2}. We are left to show that these twisted D-modules satisfy appropriate compatibility conditions. We will do so in the next sections and wrap up the proof in \cref{sec:conclusion}.

\subsection{Propagation of vacua compatibility} \label{subsec:pov} We begin describing the analogue of \eqref{eq:pov}. We first of all note that the element $E$ appearing in \eqref{eq:pov} is given the vacuum representation of the VOA $V$, that is the VOA $V$ itself. Indeed, we have that $\VV_0(X)=\CC\delta_{X=V}$ and $V=V'$ by assumptions (see e.g. \cite{FBZ:2004,zhu:global}). We now rephrase the relation between $\Sigma'$ and $\Sigma''$ using $\bTMgn$ instead of $\Surf$ so that \eqref{eq:pov} can be described in terms of twisted D-modules.

As noted in \cref{rmk:Surf=TM}, we can translate the operation of capping a boundary component with a disk (in $\Surf$), with the map that forgets one of the marked points (in $\bTMgn$). Namely, denoting by
\[ \xi_{n+1} \colon \bTM_{g,n+1} \to \bTM_{g,n}
\] the map that forgets the datum of the $n+1$-st point, equation \eqref{eq:pov} is translated into an isomorphism
\begin{equation} \label{eq:povtoprove} \xi_{n+1}^* \VV_g(X_1, \dots, X_n) \cong \VV_g(X_1, \dots, X_n, V)
\end{equation} of $\frac{c}{2}\Ac_\Lambda(-\TDelta_{g,n})$-modules.

To show that \eqref{eq:povtoprove} holds, consider $(C,P_\bullet, \tau_\bullet)$ be a curve over $S$ (i.e.\ an element of $\bTMgn(S)$). For every point $Q$ disjoint from $P_\bullet$ and a non-zero tangent vector $\zeta$ at $Q$ we need to provide an isomorphism 
\[ \VV(X_1, \dots, X_n)_{[C,P_\bullet, \tau_\bullet]} \cong \VV(X_1, \dots, X_n, V)_{[C,P_\bullet\sqcup Q, \tau_\bullet \sqcup \zeta]}  
\] of twisted D-modules over $S$. Let $\mathbf{1}$ denote the vacuum element of $V$. This is a preferred element of degree zero of $V$ which plays a role similar to that of the identity in an associative algebra, and it is also annihilated by the Virasoro operators $L_i$ whenever $i \geq -1$. As recalled in \cite[Theorem 4.3.1]{DGT2} the map
\[ X_1 \otimes \dots \otimes X_n \to X_1 \otimes \dots \otimes X_n \otimes V,  \qquad \mathrm{x}_\bullet \mapsto \mathrm{x}_\bullet \otimes \mathbf{1}
\] induces an isomorphism of the associated sheaves of coinvariants
\[ \Phi \colon \VV(X_1, \dots, X_n)_{[C,P_\bullet, \tau_\bullet]} \overset{\cong}\longrightarrow \VV(X_1, \dots, X_n, V)_{[C,P_\bullet\sqcup Q, \tau_\bullet \sqcup \zeta]}.  
\] 
Summarizing, in order to show that \eqref{eq:povtoprove} holds, we are left to show the following statement. 

\begin{proposition} \label{prop:pov} The map $\Phi$ is compatible with the action of $\frac{c}{2}\Ac_\Lambda(-\TDelta_{g,n})$.
\end{proposition}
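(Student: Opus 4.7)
The plan is to verify the compatibility of $\Phi$ with the action of $\tfrac{c}{2}\Ac_\Lambda(-\TDelta_{g,n})$ by direct inspection of the construction of the projective connection given in~\cite[Section~7]{DGT1} and~\cite{FBZ:2004}. Recall that this action is constructed locally as follows: a section $\theta$ of $\Tc_{g,n}(-\TDelta_{g,n})$ is lifted, via Virasoro uniformization, to a tuple $(\eta_i)_{i=1}^n$ of Laurent vector fields in formal neighborhoods of the marked points $P_i$, and these fields act on a representative $\mathrm{x}_\bullet$ of a coinvariant class through the Virasoro representations of central charge $c$ on the modules $X_i$. The residual projective anomaly arising from the ambiguity in the lift of $\theta$ is precisely what is encoded by the Atiyah algebra $\tfrac{c}{2}\Ac_\Lambda(-\TDelta_{g,n})$.

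First I would observe that $\xi_{n+1}^*\Lambda\cong\Lambda$, since the Hodge bundle depends only on the underlying curve and is unchanged by adding a smooth marked point. Consequently, $\xi_{n+1}$ induces a canonical morphism
\[ \xi_{n+1}^*\left(\tfrac{c}{2}\Ac_\Lambda(-\TDelta_{g,n})\right) \longrightarrow \tfrac{c}{2}\Ac_\Lambda(-\TDelta_{g,n+1}) \]
covering the natural inclusion of \emph{relative} vector fields into $\Tc_{g,n+1}(-\TDelta_{g,n+1})$, i.e.\ those that fix the extra marked point $Q$ together with its tangent direction $\zeta$. What has to be checked is that, when transported through $\Phi$, the action of the right-hand Atiyah algebra restricted along this morphism agrees with the pulled-back action on $\VV(X_\bullet)_{[C,P_\bullet,\tau_\bullet]}$.

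To verify this, I would lift any relative deformation to a tuple $(\eta_1,\ldots,\eta_n,\eta_{n+1})$ and exploit the freedom in the $(n{+}1)$-st component: because the deformation fixes $Q$ and $\zeta$, the field $\eta_{n+1}$ may be chosen to lie in $\CC[[t]]\,\partial_t$, i.e.\ to be \emph{regular} at $Q$. Under the Virasoro representation on $V$, such a field acts only through the modes $L_m$ with $m\ge -1$, and the defining property $L_m\mathbf{1}=0$ for $m\ge -1$ of the vacuum vector then yields $\eta_{n+1}\cdot\mathbf{1}=0$. Hence the extra $V$-slot in $\mathrm{x}_\bullet\otimes\mathbf{1}$ contributes neither a new derivation nor a new central term, and $\Phi$ manifestly intertwines the two Atiyah algebra actions.

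The main obstacle is the bookkeeping required to check that the chosen lifts respect the equivalence relation modulo $\Lc_{C\setminus P^\bullet}(V)$ on both sides, and that the $\Oc$-central summand of the Atiyah algebra acts by the identity on each side, so that no overall scalar correction is introduced. Both points reduce to the same regularity phenomenon: a vector field regular at $Q$ can always be extended to a meromorphic vector field on $C$ with poles only at the $P_i$, and this identifies any putative discrepancy with an element of $\Lc_{C\setminus P^\bullet}(V)$ acting through the original $X_i$-slots, which is exactly the content of the propagation of vacua argument of~\cite{codogni:POV,DGT1} run in reverse to match the two connections.
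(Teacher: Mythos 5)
Your proposal is correct and follows essentially the same route as the paper: the paper also computes the connection on the $(n{+}1)$-pointed side via the subalgebra $n\Vir_S\oplus\Vir_S^+$ with $\Vir_S^+\cong\Oc_S[\![z]\!]\partial_z$ regular at $Q$, and reduces everything to the fact that such fields act through $L_m$ with $m\ge -1$, which annihilate $\mathbf{1}$ (and contribute no central term). Your explicit remarks that $\xi_{n+1}^*\Lambda\cong\Lambda$ and that the well-definedness of the choice of lift reduces to extending regular fields at $Q$ to meromorphic fields with poles along $P_\bullet$ are left implicit in the paper but are consistent with it.
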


\begin{proof} To show that this holds, it is helpful to understand how the projective connection arises in slightly more explicit terms. For simplicity, we assume that $C$ is a family of smooth curves over $S$ (the extension to the nodal case is straightforward). Then there is an exact sequence:
\[ 0 \longrightarrow \Oc_S \longrightarrow \alpha\Ac_\Lambda \longrightarrow \Tc_S \longrightarrow 0
\] 
Furthermore, in view of the uniformization theorem, if the curve $C$ is marked by points $(P_1, \dots, P_n)$ with coordinates $(t_1, \dots, t_n)$ (lifting the tangents $\tau_i$) we also have the exact sequence of $\Oc_S$-modules
\[ 0 \longrightarrow \Tc_{C}(C \setminus P_\bullet) \longrightarrow \bigoplus_{i=1}^n \Oc_S(\!(t_i)\!)\partial_{t_i} \longrightarrow \Tc_S \longrightarrow 0.
\] Recall moreover that the Virasoro algebra $\Vir$ is an extension of $\CC(\!(t)\!)\partial_t$. By tensoring with $\Oc_S$, we get therefore, for every $i \in \{1, \dots, n\}$, the exact sequence
\[ 0 \longrightarrow \Oc_S \longrightarrow \Vir \hat{\otimes} \Oc_S \longrightarrow  \CC(\!(t_i)\!)\partial_{t_i} \longrightarrow 0.\]
One can show that the sequences above combine into the following commutative diagram
\[ \xymatrix{
& &  \Tc_{C}(C \setminus P_\bullet) \ar@{=}[r] \ar[d] & \Tc_{C}(C \setminus P_\bullet) \ar[d] \\ 
0 \ar[r]& \Oc_S \ar[r] \ar@{=}[d] & n\Vir_S \ar[r] \ar@{->>}[d] & \bigoplus_{i=1}^n \Oc_S(\!(t_i)\!)\partial_{t_i} \ar[r] \ar@{->>}[d] &  0\\
0 \ar[r]& \Oc_S \ar[r] & \alpha \Ac_\Lambda \ar[r] & \Tc_S \ar[r] &  0,
}\] 
where $n\Vir_S$ is the quotient of $\bigoplus_{i=1}^n \Vir \,\hat{\otimes}_\CC \Oc_S$ which identifies all the centers of each individual Virasoro algebra $\Vir$.

By definition, every $V$-module $X_i$ has an action of the Virasoro algebra with central charge $\alpha=c/2$ and therefore  $n\Vir_S$ naturally acts on $X_1 \otimes \dots \otimes X_n \otimes \Oc_S$. As shown in \cite{DGT1}, this action descends to an action of $\alpha\Ac_\Lambda$ on $\VV(X_1, \dots, X_n)_{[C,P_\bullet, \tau_\bullet]}$, defining in this way the projective connection. 

The action of $\alpha \Ac_\Lambda$ on $\VV(X_1, \dots, X_n, V)_{[C,P_\bullet\sqcup Q, \tau_\bullet \sqcup \zeta]}$ can be similarly described by considering $(n+1)\Vir_S$ instead of $n\Vir_S$ to take into account the extra point $Q$. However, instead of $(n+1)\Vir_S$, one can actually compute the connection via the Lie subalgebra $n\Vir_S \oplus \Vir_S^+$, where $\Vir_S^+\cong \Oc_S[\![z]\!] \partial_z$ for $z$ a local coordinate at $Q$ lifting $\zeta$. 

To finish the proof, it is enough to show that given an element $v \in n\Vir_S $ and $w \in \Vir^+_S$ and for every $\mathrm{x}_\bullet \in X_1 \otimes \dots \otimes X_n$, we have
\[ (v,w)*(\mathrm{x}_\bullet \otimes \mathbf{1}) = v*(\mathrm{x}_\bullet) \otimes \mathbf{1}.
\] 
Since 
\[(v,w)*(\mathrm{x}_\bullet \otimes \mathbf{1}) = v*(\mathrm{x}_\bullet) \otimes \mathbf{1} + \mathrm{x}_\bullet \otimes w(\mathbf{1}),
\] it remains to show that $\Vir^+_S$ acts trivially on $\mathbf{1}$. This is true because $\Vir^+_S$ is spanned, over $\Oc_S$, by operators $L_i$ with $i \geq -1$ and all of them act trivially on $\mathbf{1}$ by definition. \end{proof}

\subsection{Gluing compatibility} \label{sec:gluing} We now show that also \eqref{eqnexcision} holds true for $\Fk=\VV$. First of all note that since $\Cc_V$ is semisimple, we have that  \[\Delta = \bigoplus_{S \text{ simple}} S \boxtimes S',\]
where $S'$ is the contragredient module of $S$ and so we actually want to establish \eqref{eqnexcisionSS}. As in the previous section, we will do so in the context of twisted D-modules over moduli of curves. For this, we will need to introduce some notation. 
	
Let $\Gamma$ be a connected graph with only one edge $e$. Let $T=\nu(\Gamma)$ and $T'=\pi_0(\Gamma)$, so that $\Gamma$ can be interpreted as an element of $\Hom_{\Graphs}(T,T')$. As detailed in \cite{baki} and further expanded in \cite[Appendix A]{deshpande.mukhopadhyay:2019}, to such datum one associates a natural \emph{specialization map} 
\[ \Sp_{e} \colon \left(\frac{c}{2}\Ac_{\Lambda}\text{-mod}\right)(\bTM_{T'}) \to \left(\frac{c}{2}\Ac_{\Lambda}\text{-mod}\right)(\bTM_{T}).
\] In the expression above, for $T=T_n$ we set 
\[\bTM_{T_n} = \bigsqcup_{\substack{g \in \NN \text{ s.t.} \\(g,n) \text{ stable }}} \bTM_{g,n}.\]  When $T$ is a corolla, i.e.\ a disjoint union of $T_n$s, then $\bTM_T$ will be the product of the associated $\bTM_{T_n}$. Using this language, if $\Gamma$ has only one vertex then it follows that \eqref{eqnexcisionSS} is translated in requiring an isomorphism
\begin{equation} \label{eq:specomp} \Sp_e \VV_g(X_1, \dots, X_n) \cong \bigoplus_{S \text{ simple }} \VV_{g-1}(S', S, X_1, \dots, X_n).
\end{equation} which preserves the structures of $\frac{c}{2} \Ac_\Lambda$-modules. If instead $\Gamma$ has two vertices, and therefore $T=T^1 \sqcup T^2$ is disconnected, the required isomorphism will take the form
\begin{equation} \label{eq:specomp-g1g2} \Sp_e \VV_{g_1 + g_2}(X_1, \dots, X_{n_1}, Y_1, \dots, Y_{n_2}) \cong \bigoplus_{S \text{ simple }} \VV_{g_1}(S', X_1, \dots, X_{n_1}) \otimes \VV_{g_2}(S, Y_1, \dots, Y_{n_2}),
\end{equation} where $\{1, \dots, n_1\}$ and $\{1, \dots, {n_2}\}$ are labels of the legs of $T^1$ and $T^2$ respectively.

\begin{proposition} \label{prop:glueholds} There exist isomorphisms \eqref{eq:specomp} and \eqref{eq:specomp-g1g2} of $\frac{c}{2}\Ac_\Lambda$-modules.    
\end{proposition}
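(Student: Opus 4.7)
The strategy mirrors the proof of \cref{prop:pov}: first, construct an isomorphism of the underlying coherent sheaves; then, upgrade it to an isomorphism of $\frac{c}{2}\Ac_\Lambda$-modules by a direct check on the projective connection. Throughout, strong rationality of $V$ plays two crucial roles: it reduces the coevaluation object $\Delta$ to the finite direct sum $\bigoplus_{S \text{ simple}} S \boxtimes S'$, and it keeps the compatibility check with the Atiyah algebra action manageable term by term.

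For the first stage, I would invoke the Sewing Theorem \cite[Theorem 8.5.1]{DGT2}. Given a nodal curve with dual graph $\Gamma$ having a single edge $e$, its normalization carries two distinguished marked points corresponding to the preimages of the node, and the theorem produces a canonical identification of $\VV_g(X_\bullet)$ on the formal neighborhood of the boundary divisor with a formal series in the smoothing parameter $q$ whose coefficients are assembled from $\VV_{g-1}(M',M,X_\bullet)$ (or $\VV_{g_1}(M',X_\bullet) \otimes \VV_{g_2}(M,Y_\bullet)$ in the disconnected case) paired through contragredient duality over $M \in \Cc_V$. Rationality truncates this a priori infinite sum to the finite direct sum indexed by simple modules, yielding the required sheaf-level isomorphism in the image of $\Sp_e$.

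For the second stage, the projective connection on both sides is induced from the Virasoro action at each marked point through the diagram relating $n\Vir_S$, the tangent sheaf, and $\frac{c}{2}\Ac_\Lambda$, as set up in the proof of \cref{prop:pov}. On the sewn side, the two Virasoro algebras attached to the preimages of the node combine with the Virasoro action along the smoothing parameter $q$; on the bulk side, the Atiyah algebra $\frac{c}{2}\Ac_\Lambda(-\TDelta)$ acts through the same global Virasoro data restricted to a formal neighborhood of the boundary. The concrete check is that, under the sewing map on each summand $S \boxtimes S'$, the infinitesimal deformation along the normal direction to the boundary corresponds exactly to the coupled $L_0$-action pairing $S$ with $S'$, and that the two Virasoro operators acting at the paired points reassemble into the restriction of the single Virasoro action governing the smoothing.

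The main obstacle is the careful bookkeeping of the central extensions. One must verify that the central charges of the two paired Virasoro algebras and of the Virasoro action along $q$ combine symmetrically so that the specialization of the $\frac{c}{2}\Ac_\Lambda$-structure on the left matches the direct-sum $\frac{c}{2}\Ac_\Lambda$-structure on the right, including the correct behavior of the Hodge line bundle $\Lambda$ under normalization. Here rationality again ensures that the argument can be performed on each simple summand separately and then reassembled. Once \eqref{eq:specomp} is established, the disconnected case \eqref{eq:specomp-g1g2} follows by the same argument, applied to a node separating the curve into two components of genera $g_1$ and $g_2$.
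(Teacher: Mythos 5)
Your proposal follows essentially the same route as the paper: the sheaf-level isomorphism comes from the Sewing Theorem of \cite{DGT2} (with rationality truncating the sum over simples), and the connection compatibility is verified by checking that the $q\partial_q$-direction normal to the boundary divisor matches the coupled $L_0$-action at the two preimages of the node --- precisely the computation the paper isolates as \cref{lem:key}, where the lift of $q\partial_q$ to $\alpha_\pm z_\pm\partial_{z_\pm}$ with $\alpha_++\alpha_-=1$ produces the conformal-weight shift $w_S$. The argument is correct and matches the paper's proof in structure and substance.
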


\begin{proof} We will only consider \eqref{eq:specomp} as \eqref{eq:specomp-g1g2} proceeds in an analogue way. To show the claim, we adapt the argument introduced in  \cite[Sections 7.6--7.8]{baki} and well detailed in \cite[Section 16 and Appendix A]{deshpande.mukhopadhyay:2019}. One starts with a curve $C$ over a smooth variety $S$ (together with marked points $P_\bullet$ and non-zero tangent directions $\tau_\bullet$) such that $C$ is nodal along a smooth divisor $D \subset S$ and such that $C|_{D}$ has only one node (this condition correspond to the fact that $\Gamma$ has only one edge).  In view of how $\Sp_e$ is defined, in order to show that \eqref{eq:specomp} holds, one must show that there is a connection-preserving isomorphism between
\[ \Sp_e \VV(X_\bullet)_C
\] and the sheaf of coinvariants 
\[  \bigoplus_{S \text{ simple }} \VV(X_\bullet, S, S')_{\widetilde{C}},
\] where $\widetilde{C}$ is an auxiliary curve obtained from $C$ by normalizing the fiber of $C$ along $D$. 

Adapting the arguments used to prove \cite[Theorem 7.8.8]{baki} (see e.g. \cite[Proposition 7.8.6 and 7.8.7]{baki} and \cite[Proposition 16.5]{deshpande.mukhopadhyay:2019}), the gluing compatibility naturally follows from \cref{lem:key}, where an isomorphism \eqref{eq:specomp} is constructed on an infinitesimal neighborhood $D^{(n)}$ of $D$. \end{proof}

In order to conclude the proof of \cref{prop:glueholds}, we need to state and prove \cref{lem:key}, for which we need to set up some notation. Let $D^{(n)}$ be an infinitesimal neighbourhood of $D$ in $S$ and denote by $\VV(X_\bullet)_C^{(n)}$ the induced $\Oc_{D^{(n)}}$-module. Note that it can be equivalently defined as the sheaf of coinvariants associated with the restriction of $C$ to $D^{(n)}$. Similarly, we will use the notation $\widetilde{C}_{D^{(n)}}$ for the family of smooth curves over $D^{(n)}$ which is obtained normalizing $C$ along $D$. Finally, let $q=0$ be a local equation for $D$ in $S$, so that $D^{(n)}=\Spec(\Oc_S/(q^{n+1}))=\Spec(\Oc_D[\![q]\!]/(q^{n+1}))$. 

We recall that the contragredient module $S'$ of $S$ is a $V$-module which has as underlying vector space 
\[ S'= \bigoplus_{d \in \NN} \Hom_{\CC}(S_d, \CC).
\]
In particular, the vector space $S \otimes S'$ contains the elements $\ee_{S,d}$ corresponding to the identity map of $S_d$, seen as an element of $S_d \otimes \Hom_{\CC}(S_d, \CC)$.
We define the series
\[\ee_S(q) \coloneqq \sum_{d \in \NN} \ee_{S,d} q^d\] which naturally lives in $(S\otimes S')[\![q]\!]$. Tensoring with the element $\ee_S(q)$ defines the map
\[\EE_S \colon  (X_1 \otimes \dots \otimes X_n) \widehat{\otimes} \Oc_D[\![q]\!] \to (X_1 \otimes \dots \otimes X_n \otimes S \otimes S')\widehat{\otimes} \Oc_D[\![q]\!], \] which corresponds to a component of the map (47) of \cite{DGT2}.
Denote the truncation of $\EE_S$ to $D^{(n)}$ by  \[\EE_S^{(n)} \colon (X_1 \otimes \dots \otimes X_n)\otimes \Oc_D[\![q]\!]/(q^{n+1}) \to (X_1 \otimes \dots \otimes X_n \otimes S \otimes S')\otimes \Oc_D[\![q]\!]/(q^{n+1}).\] By taking the sum over the finitely many isomorphisms classes of $V$-modules we obtain the map
\begin{equation*}\label{eq:EE} \EE=\bigoplus_S \EE_S \colon (X_1 \otimes \dots \otimes X_n)\widehat{\otimes} \Oc_D[\![q]\!] \to \bigoplus_{S} (X_1 \otimes \dots \otimes X_n \otimes S \otimes S')\widehat{\otimes} \Oc_D[\![q]\!],
\end{equation*} and its truncation $\EE^{(n)}$. The statement that will imply \cref{prop:glueholds} is the following.

\begin{lemma} \label{lem:key} The $\Oc_{D^{(n)}}$-linear map
\[\EE^{(n)} \colon X_\bullet\otimes \Oc_{D^{(n)}} \to \bigoplus_{S} X_\bullet \otimes S \otimes S' \otimes \Oc_{D^{(n)}}, \quad \mathrm{x}_\bullet \otimes 1 \mapsto \sum_S \mathrm{x}_\bullet \otimes \ee_S(q) 
\] induces an isomorphism of $\Oc_{D^{(n)}}$-modules
\[ \VV(X_\bullet)_{C}^{(n)} \longrightarrow \bigoplus_{S} \VV(X_\bullet, S, S')_{\widetilde{C}_{D^{(n)}}}.
\] Furthermore one has the compatibility
\begin{equation} \label{eq:qdq} (q\partial_q) \circ \EE_S^{(n)} = \EE_S^{(n)} \circ q \partial_q -w_S \cdot \EE_S^{(n)}
\end{equation} where $w_S \in \QQ$ is the conformal weight of $S$.
\end{lemma}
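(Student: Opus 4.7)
The plan is to handle the two assertions separately. The isomorphism of $\Oc_{D^{(n)}}$-modules on coinvariants is essentially a repackaging of the Sewing Theorem \cite[Theorem~8.5.1]{DGT2}: the map $\EE$ is an explicit component of the sewing morphism appearing in equation~(47) of \emph{loc.\ cit.}, and the rationality of $V$ ensures both the finiteness of the direct sum over simples and the fact that the sewing isomorphism is well-defined modulo any power of $q$. Truncating to $D^{(n)}$ and passing to coinvariants then yields the desired statement.

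The heart of the lemma is the compatibility \eqref{eq:qdq}, which I would establish by a local calculation at the node, in the spirit of \cite[Sections~7.6--7.8]{baki} and \cite[Section~16]{deshpande.mukhopadhyay:2019}. The key is to express the Atiyah algebra action on both sides in terms of Virasoro operators at marked points, following the uniformization description used in the proof of \cref{prop:pov}. Since $q\partial_q$ is tangent to $D$ and the $n$ original marked points are fixed under the deformation, the Virasoro lift at those points vanishes, so its Atiyah action on $\VV(X_\bullet)_C^{(n)}$ reduces to the naive derivative in $q$, sending $\mathrm{x}_\bullet\otimes f(q)$ to $\mathrm{x}_\bullet\otimes q f'(q)$. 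On the smooth side one instead has two new marked points $P_+,P_-$ arising from the normalization, equipped with local coordinates $x,y$ satisfying $xy=q$; thus $q\partial_q$ lifts canonically to $x\partial_x$ (equivalently $y\partial_y$, or the symmetric combination), whose action at the insertion $P_+$ is given by the Virasoro operator $L_0$, with a sign fixed by the conventions of \cite{FBZ:2004,DGT1}.

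To conclude, I would evaluate this Virasoro contribution on the sewing kernel. Since $\ee_{S,d}\in S_d\otimes \Hom_\CC(S_d,\CC)$ is an $L_0$-eigenvector of weight $w_S+d$ on both tensor factors, one has $L_0\,\ee_S(q)=\sum_d(w_S+d)\,\ee_{S,d}q^d=(w_S+q\partial_q)\,\ee_S(q)$, so the Virasoro correction to the Atiyah action on the smooth side contributes $-L_0\,\ee_S(q)$; combined with the naive derivative $q\partial_q\,\ee_S(q)$ this gives $(q\partial_q-L_0)\,\ee_S(q)=-w_S\,\ee_S(q)$, which is exactly \eqref{eq:qdq}. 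The step I expect to require the most care is pinning down the correct sign in the Virasoro-to-Atiyah conversion at $P_+,P_-$, so that the $L_0$ coming from the naive derivative of $q^d$ and the $-L_0$ coming from the connection form correctly combine to leave only $-w_S$; once these conventions are fixed in agreement with \cite{DGT1}, the remainder is a direct computation.
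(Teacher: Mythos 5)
Your proposal is correct and follows essentially the same route as the paper: the isomorphism is quoted from the Sewing Theorem \cite[Theorem~8.5.1]{DGT2}, and the compatibility \eqref{eq:qdq} is obtained by the Leibniz rule together with the fact that $q\partial_q$ lifts to a vector field acting at the two new points as $\alpha_\pm z_\pm\partial_{z_\pm}$ (i.e.\ as $-\alpha_\pm L_0$) with $\alpha_++\alpha_-=1$, so that on the $L_0$-eigenvector $\ee_{S,d}$ of weight $w_S+d$ the naive derivative $d$ cancels and only $-w_S$ survives. Your observation that the split between $x\partial_x$ and $y\partial_y$ is immaterial is exactly the role played by the condition $\alpha_++\alpha_-=1$ in the paper's computation, and your sign conventions come out consistent with \eqref{eq:qdq}.
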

\begin{proof}
The fact that the map $\EE(q)^{(n)}$ induces a map between sheaves of coinvariants follows from \cite[Theorem 8.5.1]{DGT2}, where it is also shown that the induced map is actually an isomorphism (called $\Psi$ in \cite{DGT2}). We are thus only left to check that \eqref{eq:qdq} holds true. 
Let $\mathrm{x}_\bullet \in X_\bullet$. Then using the Leibniz rule we obtain:

\begin{align*}
(q\partial_q) \circ \EE_S^{(n)}(\mathrm{x}_\bullet) & = (q\partial_q)(\mathrm{x}_\bullet \otimes \ee_S)\\
&= (q\partial_q)(\mathrm{x}_\bullet) \otimes \ee_S + \mathrm{x}_\bullet \otimes q \partial_q(\ee_S)\\
&=  \EE_S^{(n)} \circ q \partial_q (\mathrm{x}_\bullet) + 
\sum_{d=0}^n \mathrm{x}_\bullet \otimes (q \partial_q)(e_d \otimes \epsilon_d) q^d + \sum_{d=0}^n d (\mathrm{x}_\bullet \otimes e_d \otimes \epsilon_d)  q^d. 
\end{align*}
In the last equality, we wrote the truncation of $\ee_S$ as $\sum_{d =0}^n e_d \otimes \epsilon_d \; q^d$, where $e_d \in S_d$ and $\epsilon_d \in S'_d$ are dual to each other. To conclude, we need to explain how $q \partial_q$ acts on $S \otimes S'$. As in \cite[Example 7.8.3]{baki} we can see that the tangent vector $q\partial_q$ lifts to a tangent vector of $\widetilde{C}_{D^{(n)}}$ whose expansion around $Q_\pm$ is given by  $\alpha_\pm z_\pm \partial_{z_\pm}$  for some $\alpha_\pm \in \CC$ such that $\alpha_+ + \alpha_- =1$. Therefore we have that
\begin{align*}
(q \partial_q)(e_d \otimes \epsilon_d) & = (\alpha_+ z_+ \partial_{z_+} e_d) \otimes \epsilon_d  + e_d \otimes (\alpha_- z_- \partial_{z_-} \epsilon_d)\\
&= - \alpha_+ L_0(e_d) \otimes \epsilon_d - \alpha_- (e_d \otimes L_0(\epsilon_d))\\
&= -\alpha_+ (d+w_S) (e_d \otimes \epsilon_d)- \alpha_- (d+ w_S)(e_d \otimes \epsilon_d)\\
&= -(d+w_S) (e_d \otimes \epsilon_d).
\end{align*}
Combining the two computations, we obtain
\begin{align*}
(q\partial_q) \circ \EE_S^{(n)}(\mathrm{x}_\bullet) & = 
\EE_S^{(n)} \circ q \partial_q (\mathrm{x}_\bullet) - \sum_{d=0}^n (d+w_S -d ) (\mathrm{x}_\bullet \otimes e_d \otimes \epsilon_d) q^d  \\
&= \EE_S^{(n)} \circ q \partial_q (\mathrm{x}_\bullet) + w_S \EE_S^{(n)}(\mathrm{x}_\bullet) \ , 
\end{align*} which concludes the argument.
\end{proof}

\subsection{Proof of \texorpdfstring{\cref{thm:VGisMF}}{Theorem \ref{thm:VGisMF}}} \label{sec:conclusion} We have now all the ingredients to show that \cref{thm:VGisMF} holds. First of all, using \cref{prop:pov} and \cref{rmk:g0} one shows that to every stable pair $(g,0)$, the sheaf $\VV_g$ is well defined and it is naturally equipped with an action of $c/2 \Ac_\Lambda(-\TDelta_{g,0})$. Furthermore \cref{prop:pov} shows that this system of twisted D-modules with logarithmic singularities comes equipped with natural maps corresponding to forgetting marked points.

Moreover, \cref{prop:glueholds} tells us that this system of twisted D-modules with logarithmic singularities is equipped with natural isomorphisms corresponding to the operation of edge contraction. 

Since every morphism between the spaces $\bTMgn$ (induced from those in $\Graphs$) is obtained composing forgetting marked points and contracting edges, we obtain that $\VV$ indeed defines a well defined modular algebra over the operad $\Surf^{c/2}$. 
The fact that $\Surf^{c/2}$ qualifies as one of the extensions allowed in \cref{def:MF} follows from \cref{remanomaly} and Proposition~\ref{prop:pov}.
\qed

\section{Consequences and further questions} \label{sec:final} We describe here some consequences of \cref{thm:VGisMF} as well as open questions and direction of further investigation. We begin with a background section which will be helpful to understand some of the consequences of \cref{thm:VGisMF}.

\subsection{Ribbon Grothendieck--Verdier duality}   \label{sec:GVbackground}
Surfaces of genus zero form a cyclic operad in the sense of \cite{gk} which is equivalent to the operad of framed little disks discussed e.g.\ in \cite{salvatorewahl}.
As a result, any modular functor yields, by genus zero restriction, a cyclic framed $E_2$-algebra, see \cite[Section~7.3]{cyclic} for the technical details. By \cite[Theorem 5.12]{cyclic} a $\Rexf$-valued cyclic framed $E_2$-algebra amounts exactly to a category $\Cc \in \Rexf$ with the following structure:
\begin{itemize}
\item a monoidal product $\otimes \colon  \Cc\boxtimes \Cc\to \Cc$ with unit $I$, such that the hom functors $\Hom_\Cc(-\otimes X,K)$ are via representable  $\Hom_\Cc(-\otimes X,K)\cong \Hom_\Cc(-,DX)$ such that $D\colon \Cc^\op \to \Cc$ is an anti-equivalence;
\item a braiding $c_{X,Y} \colon X \otimes Y \ra{\cong}Y\otimes X$ for $\otimes$;
\item a balancing, i.e.\ natural automorphisms $\theta_X \colon X\to X$ satisfying that $\theta_I=\id_I$ for the monoidal unit $I$ and $\theta_{X\otimes Y}=c_{Y,X}c_{X,Y} (\theta_X \otimes \theta_Y)$ for $X,Y\in \Cc$, that additionally satisfies $\theta_{DX}=D\theta_X$.
\end{itemize}
This data on $\Cc$ is what in \cite{bd} is called a \emph{ribbon Grothendieck--Verdier structure}.
Therefore, any modular functor produces by genus zero restriction a ribbon Grothendieck--Verdier category \cite[Theorem 7.17]{cyclic}. Thus \cref{thm:VGisMF} gives us:

\begin{corollary} \label{cor:tensor} The modular functor $\VV$ naturally induces on $\Cc_V$ a \textit{ribbon Grothendieck--Verdier} structure. \qed
\end{corollary}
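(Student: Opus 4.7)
The plan is to assemble the statement from three ingredients already set up in the paper: \cref{thm:VGisMF} (which establishes that $\VV$ is a modular functor), the genus-zero restriction procedure for modular functors recalled in \cite[Section~7.3]{cyclic}, and the equivalence \cite[Theorem 5.12]{cyclic} between $\Rexf$-valued cyclic framed $E_2$-algebras and ribbon Grothendieck--Verdier categories. Since all three of these are either proved in this paper or quoted from \cite{cyclic}, the argument is essentially an invocation rather than a construction.

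First I would use \cref{thm:VGisMF} to treat $\VV$ as a modular $\Surf^{c/2}$-algebra in $\Rexf$. By \cref{def:MF}, every extension of $\Surf$ in the sense of \cite{brochierwoike} comes equipped with a section over genus zero, so restricting $\VV$ along this section yields an algebra over the cyclic operad of genus-zero surfaces. As noted at the start of \cref{sec:GVbackground}, this cyclic operad is equivalent to the framed little 2-disks operad, so one obtains a cyclic framed $E_2$-algebra structure on the underlying category $\Cc_V$.

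Next, I would translate this cyclic framed $E_2$-algebra structure into the more concrete data listed in the bulleted description of \cref{sec:GVbackground} by applying \cite[Theorem 5.12]{cyclic}. The $E_2$-portion supplies the balanced braided monoidal product $\cbotimes$ with unit $V \in \Cc_V$; the cyclic portion promotes the non-degenerate symmetric pairing $\kappa$ used to package $\VV$ as a modular algebra in \cref{secmodularalg} to a Grothendieck--Verdier dualizing functor $D\colon \Cc_V^\op\to\Cc_V$; and the compatibility $\theta_{DX}=D\theta_X$ between balancing and duality is a direct consequence of the cyclic (as opposed to merely planar) structure.

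There is no real obstacle here, since all the heavy lifting has been done in the proof of \cref{thm:VGisMF} and in the cited operadic theorems from \cite{cyclic}. The only residual point worth flagging is that the duality $D$ produced by the cyclic genus-zero structure coincides with the contragredient duality $M\mapsto M'$ on $\Cc_V$; this is immediate because the pairing $\kappa$ feeding into the definition of the modular algebra $\VV$ was chosen to be $\kappa(M,N) = \Hom_{\Cc_V}(M,N')$, so that the representing object of $\Hom_{\Cc_V}(-\otimes M, V)\cong \Hom_{\Cc_V}(-,DM)$ is forced to agree with $M'$.
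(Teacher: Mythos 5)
Your proposal is correct and follows essentially the same route as the paper: the corollary is obtained by combining \cref{thm:VGisMF} with the genus-zero restriction of a modular functor to a cyclic framed $E_2$-algebra and the equivalence of \cite[Theorem 5.12]{cyclic} (packaged in the paper as the citation of \cite[Theorem 7.17]{cyclic}), which is exactly the chain of invocations you describe. Your additional remark identifying the Grothendieck--Verdier duality $D$ with the contragredient $M\mapsto M'$ via the pairing $\kappa$ is consistent with the paper's setup and with \cref{remgvduality}.
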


This ribbon Grothendieck--Verdier structure is inherited from the topology. A priori, we do not know how it relates to the algebraically constructed ribbon Grothendieck--Verdier structure in \cite{alsw} based on \cite{HL:I,HL:II,HL:III,HL:IV}, see \cref{sec:HL} for more details.
In any case, let us observe that in \cref{cor:tensor}, as well as in~\cite{alsw}, the Grothendieck--Verdier duality takes the contragredient representation, see also \cref{remgvduality} below.

\label{sec:tensor} 
We will now describe explicitly the monoidal product featuring in \cref{cor:tensor} arising from $\VV$, and only briefly discuss how the braiding and balancing arise. Recall that the antiequivalence $D \colon \Cc_V^\op \to \Cc_V$ is given by sending a module $M$ to its contragredient $M'$. First of all the monoidal product induced by $\VV$, denoted
\begin{equation} \label{eq:tensor}  - \cbotimes - \colon \Cc_V \boxtimes \Cc_V \to \Cc_V,
\end{equation} is given by \[ \qquad M \boxtimes N \mapsto M \cbotimes N \coloneqq \bigoplus_{S \text{ simple}} \VV_0(M, N, S')\otimes_\CC S,
\] where $M \cbotimes N$ naturally inherits the structure of a $V$-module since it is a direct sum of $V$-modules. 

\begin{remark} One can see that this implies that there are natural isomorphisms $X \cbotimes V \cong X \cong X \cbotimes V$ for every $X \in \Cc_V$, making $V$ the monoidal unit of $(\Cc_V, \cbotimes)$. Since the category $\Cc_V$ is semisimple, it is enough to show this for simple elements $X$. By definition
\[ X \cbotimes V = \bigoplus_{S \text{ simple}} \VV_0(X, V, S')\otimes_\CC S\ , \]
and Propagation of Vacua (see \cref{subsec:pov}) provides an isomorphism
\[ \VV_0(X, V, S') \cong \VV_0(X,S').\]
Furthermore, in view of \cite[Proposition 7.2]{zhu:global}, one can naturally identify $\VV_0(X,S') \cong  \Hom_{\Cc_V}(X',S')$. Since both $X$ and $S$ are simple, we have $\Hom_{\Cc_V}(X',S') \cong \CC \delta_{S=X}$, concluding the argument.
\end{remark}

For the category $\Cc_V$ to be a monoidal category, one also needs to specify the datum of an \textit{associator}, i.e.\ an isomorphism
\[ \bigoplus_{X,Y  \text{ simple}} \VV_0(A,B,X')\otimes_\CC \VV_0(X,C, Y') \otimes_\CC Y \cong \bigoplus_{X,Y  \text{ simple}} \VV_0(A,Y',X')\otimes_\CC \VV_0(X, B,C) \otimes_\CC Y.
\] In this situation, the associator arises from the composition of isomorphisms
\begin{equation} \label{eq:AB}\bigoplus_{X \text{ simple}} \VV_0(A,B,X')\otimes_\CC \VV_0(X,C, Y') \cong  \VV_0(A,B,C,Y')
\end{equation}
and \begin{equation} \label{eq:BC} \VV_0(A,B,C,Y') \cong \bigoplus_{X \text{ simple}} \VV_0(A,Y',X')\otimes_\CC \VV_0(X, B,C)   
\end{equation}
 which are obtained from the isomorphisms given in \eqref{eqnexcisionSS} induced from the maps in $\Surf_0$ corresponding to the morphisms in $\Graphs$ given in Figure~\ref{figure:associator}.
 
\begin{figure}[h!]
\centering
\includegraphics[width=0.66\textwidth]{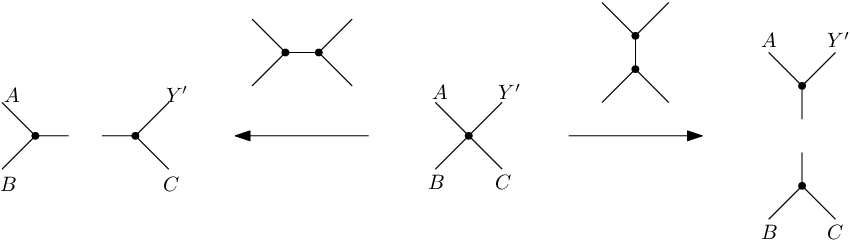}
\caption{Maps in $\Graphs$ inducing the associator in $\Cc_V$.}
\label{figure:associator}
\end{figure}

In other words, the fact that $\VV$ is a modular functor means that $\VV$ comes equipped with natural isomorphisms between the space of coinvariants associated to a marked curve $(C, P_\bullet, \tau_\bullet)$ and the space of coinvariants associated to all possible degenerations of $(C, P_\bullet, \tau_\bullet)$. Given a $\PP^1$ marked by $4$ distinct points (with non-zero tangent data), we can degenerate it to the union of two $\PP^1$ meeting at a node in more than one way (depending which points end up in which copy of $\PP^1$). If the points are labeled by $(A, B, C, Y')$, then splitting them into $(A,B)$ and $(C,Y')$ will give the isomorphism \eqref{eq:AB}, while splitting them into $(A,Y')$ and $(B,C)$ gives the isomorphism \eqref{eq:BC}.

Furthermore, since the fundamental group of $\bTM_{0,n}$ is the ribbon braid group on $n-1$ strands, we have that $\VV_0$ gives rise to a natural \textit{braiding} $X \cbotimes Y \cong Y \cbotimes X$ using $n=3$ and to the \textit{balancing} condition using $n=2$.

\begin{remark}
    The Grothendieck ring of $\Cc_V$ has already been studied in \cite{DGT3}, where it is shown that coinvariants from sufficiently nice VOAs (as those considered here and called \textit{of CohFT type} in \cite{DGT3}) can be used to define a Cohomological Field Theory via their Chern character. Under this perspective, we can interpret \cref{thm:modularfusion} and \cref{cor:tensor} as an enhancement of \cite{DGT3}. 
\end{remark}

\subsection{Relation to factorization homology}
\emph{Factorization homology}~\cite{higheralgebra,AF} allows one, roughly speaking, to integrate an algebra over the (framed) little $n$-disks operad over an $n$-dimensional oriented manifold. 
The concept takes inspiration from Beilinson--Drinfeld's notion of a \emph{chiral algebra}~\cite{bdca}, which is closely related to vertex operator algebras. In \cite{cg-fa} a relation between vertex (operator) algebras and factorization algebras---a concept closely related to factorization homology---is provided. The connection between factorization homology and spaces of conformal blocks in a topological sense is discussed \cite{brochierwoike}, where modular functors are classified using factorization homology.

The combination of \cite{brochierwoike} with \cref{thm:comparisonrt} gives us the following rather straightforward description of $\mathbb{V}$ via factorization homology:
For a compact oriented surface $\Sigma$, factorization homology of $\Cc_V$ gives us a linear category $\int_\Sigma \Cc_V$ together with a homotopy fixed point $\mathcal{O}_\Sigma^{\Cc_V} \in \int_\Sigma \Cc_V$, the so-called \emph{quantum structure sheaf}~\cite{bzbj}.
If $\Sigma$ is a disk, then $\int_\Sigma \Cc_V$ is simply $\Cc_V$, and the quantum structure sheaf just $V$.
In very intuitive terms, $\mathcal{O}_\Sigma^{\Cc_V} \in \int_\Sigma \Cc_V$ arises from ``globalizing'' the ``local'' object $V$ in the factorization homology of a disk.

Let $\Sigma$ be an oriented surface that, for simplicity, is closed, and let $H$ be a three-dimensional compact oriented handlebody with $\partial H=\Sigma$. By \cite[Section~4.1]{brochierwoike} the ansular functor~\cite{10.1093/imrn/rnad178} associated to $\Cc_V$ (the extension of the cyclic framed $E_2$-algebra $\Cc_V$ to handlebodies) induces a functor
\begin{align*}
    \Phi_{\Cc_V}(H) \colon \int_\Sigma \Cc_V \to \mathsf{vect} 
\end{align*}
from the factorization homology $\int_\Sigma \Cc_V$ to the category of finite-dimensional vector spaces.
We call $\Phi_{\Cc_V}(H)$ the \emph{generalized skein module} for $H$ and $\Cc_V$.
The following is a consequence of the classification of modular functors in \cite{brochierwoike} and the fact that $\Cc_V$ arises as the evaluation of a modular functor via restriction to genus zero:

\begin{corollary}\label{corconnected}
$\Cc_V$ is connected in the sense of \cite{brochierwoike}, i.e.\ $\Phi_{\Cc_V}(H)\cong \Phi_{\Cc_V}(H')$ for all handlebodies $H$ and $H'$ with boundary $\Sigma$. Moreover, the value of $\Phi_{\Cc_V}(H) \colon \int_\Sigma \Cc_V\to \mathsf{vect}$ on $\mathcal{O}_\Sigma^{\Cc_V}$ is canonically isomorphic to $\VV(\Sigma)$,
\begin{align*}\Phi_{\Cc_V}(H) \left(\mathcal{O}_\Sigma^{\Cc_V}\right) \cong \VV(\Sigma), \end{align*} as projective mapping class group representation, with the action on $\Phi_{\Cc_V}(H)  \left(\mathcal{O}_\Sigma^{\Cc_V}\right)$  induced by the homotopy fixed point structure on $\mathcal{O}_\Sigma^{\Cc_V}$ and connectedness. 
\end{corollary}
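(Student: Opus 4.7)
The plan is to deduce the corollary as an essentially immediate consequence of the classification of modular functors via factorization homology established in \cite{brochierwoike}, combined with Theorem \ref{thm:VGisMF}. That classification asserts that a cyclic framed $E_2$-algebra $\Cc$ is \emph{connected} (in the sense that the generalized skein module $\Phi_\Cc(H)(\mathcal{O}_\Sigma^\Cc)$ depends, up to canonical isomorphism, only on $\Sigma = \partial H$) if and only if it arises as the genus-zero restriction of a modular functor; moreover, in that case the common value is canonically identified with the evaluation of that modular functor on $\Sigma$, compatibly with the projective $\Map(\Sigma)$-action.

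First I would invoke Theorem \ref{thm:VGisMF} to produce the modular functor $\VV$, whose restriction to genus zero equips $\Cc_V$ with the ribbon Grothendieck--Verdier (equivalently, cyclic framed $E_2$) structure recorded in Corollary \ref{cor:tensor}. This is precisely the structure that feeds into the ansular functor $\Phi_{\Cc_V}$ of \cite{10.1093/imrn/rnad178}, since the ansular construction takes as input a cyclic framed $E_2$-algebra. With this identification in place, the hypotheses of the classification theorem are verified, and applying it yields simultaneously the connectedness of $\Cc_V$ and the canonical isomorphism $\Phi_{\Cc_V}(H)(\mathcal{O}_\Sigma^{\Cc_V}) \cong \VV(\Sigma)$.

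The only point requiring a moment of care, rather than a genuine obstacle, is checking that the projective $\Map(\Sigma)$-action on the left-hand side, arising from the homotopy fixed point structure of the quantum structure sheaf together with connectedness, coincides with the action on $\VV(\Sigma)$ furnished by the modular functor structure. This compatibility is itself part of the reconstruction statement in \cite{brochierwoike}: the modular functor reconstructed from a connected cyclic framed $E_2$-algebra recovers its projective mapping class group representations through exactly this ansular-functor-with-quantum-structure-sheaf recipe. No further calculation is required beyond quoting that theorem and invoking Theorem \ref{thm:VGisMF}.
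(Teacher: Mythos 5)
Your proposal is correct and follows exactly the route the paper takes: the paper presents \cref{corconnected} as an immediate consequence of the classification of modular functors in \cite{brochierwoike} together with the fact (from \cref{thm:VGisMF} and \cref{cor:tensor}) that $\Cc_V$ arises as the genus-zero restriction of the modular functor $\VV$. Your additional remark that the compatibility of the projective $\Map(\Sigma)$-actions is itself part of the reconstruction statement is precisely the reason the paper offers no further argument.
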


In summary, the generalized skein module translates the quantum structure sheaf into the space of conformal blocks and thereby encodes the projective mapping class group representation through factorization homology.

\subsection{\texorpdfstring{$\Cc_V$}{CV} is a modular fusion category}

By analyzing more carefully the properties of $\Cc_V$ we can strengthen the result of \cref{cor:tensor} and show  that actually $\Cc_V$ is a \emph{modular fusion category}. To this end, recall that  a modular fusion category is a finitely semisimple category $\cat{A}$ with rigid monoidal product, simple unit, braiding, balancing $\theta$ such that $\theta_{X^\vee}=\theta_X^\vee$ (here $-^\vee$ is the rigid duality),
simple unit and non-degenerate braiding.
Non-degeneracy of the braiding means that the subcategory $Z_2(\cat{A})\subset \cat{A}$ of all $X\in\cat{A}$ such that $c_{Y,X}\circ c_{X,Y}=\id_{X\otimes Y}$ 
is trivial, i.e.\ generated by the monoidal unit under finite direct sums. The subcategory $Z_2(\cat{A})$ is also called the
\emph{Müger center} of $\cat{A}$.

\begin{theorem} \label{thm:modularfusion} The modular functor $\VV$ naturally induces on $\Cc_V$  the structure of a modular fusion category. 
\end{theorem}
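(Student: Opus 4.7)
The strategy is to enhance the ribbon Grothendieck--Verdier structure on $\Cc_V$ coming from genus zero restriction (Corollary~\ref{cor:tensor}) to a modular fusion structure by separately establishing the three properties that are still missing: finite semisimplicity with simple unit, rigidity, and non-degeneracy of the braiding. The first of these is essentially already in place because $V$ is strongly rational: this gives that $\Cc_V$ is a finitely semisimple $\CC$-linear category, and the simplicity of $V$ as a $V$-module makes it a simple monoidal unit for the tensor product $\cbotimes$ from \eqref{eq:tensor}. Moreover, the balancing $\theta$ coming from the modular functor $\VV$ automatically satisfies $\theta_{DX}=D\theta_X$ for the Grothendieck--Verdier duality $D \colon \Cc_V^\op \to \Cc_V$ sending $M$ to its contragredient $M'$, since $D$ is the duality built into any ribbon GV structure.

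The next step is to upgrade the Grothendieck--Verdier duality $D$ to a rigid duality. Here I would apply the recent rigidity criterion of Etingof--Penneys \cite{etingof.penneys:2024:rigidity}: in a finitely semisimple ribbon Grothendieck--Verdier category with simple unit, the duality $D$ is automatically a rigid duality provided mild hypotheses hold, which are verified in our situation because $\Cc_V$ is a finitely semisimple linear category with a braided monoidal product coming from conformal blocks at genus zero and all simple objects have duals given by contragredients.

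With rigidity in hand, $\Cc_V$ is a ribbon fusion category, and it remains to establish that its braiding is non-degenerate, i.e.\ that the Müger center $Z_2(\Cc_V)$ is trivial. For this I would invoke the factorization homology characterization of modular functors from \cite{brochierwoike}: a ribbon fusion category $\cat{A}$ is the genus zero value of a modular functor if and only if it is modular. More precisely, \cref{thm:VGisMF} provides an extension of $\Cc_V$ to a modular $\Surf^{c/2}$-algebra valued in $\Rexf$, and by the classification result of \cite{brochierwoike} such an extension exists if and only if $\Cc_V$ is modular in the classical sense. Thus the mere existence of the modular functor $\VV$ forces non-degeneracy of the braiding, since otherwise the higher genus spaces of coinvariants could not carry consistent mapping class group representations satisfying gluing.

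The main obstacle I anticipate is carefully verifying that the hypotheses of the Etingof--Penneys rigidity criterion are actually met by the ribbon GV structure on $\Cc_V$ as constructed topologically from $\VV$ rather than from the Huang--Lepowsky tensor product, since a priori these structures could differ (as discussed in the introduction and \cref{sec:HL}). In particular, one must ensure that the tensor product $\cbotimes$ defined in \eqref{eq:tensor} has enough exactness and finiteness properties to invoke \cite{etingof.penneys:2024:rigidity}; this should follow from the fact that $\Cc_V$ is finitely semisimple and that $\cbotimes$ is defined as a finite direct sum weighted by the finite-dimensional genus zero coinvariant spaces $\VV_0(M,N,S')$, but it is worth writing this out explicitly. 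The non-degeneracy step, by contrast, is essentially automatic once rigidity and the higher genus existence are established.
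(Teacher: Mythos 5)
Your overall architecture (finite semisimplicity $+$ rigidity via Etingof--Penneys $+$ non-degeneracy of the braiding) matches the paper's, and the rigidity step is essentially the right approach, but you are missing the specific hypothesis that makes \cite{etingof.penneys:2024:rigidity} applicable: the paper does not invoke a generic ``mild hypotheses'' criterion for ribbon GV categories, it uses the fact that $\Cc_V$ is an \emph{$r$-category} in the sense of \cite[Definition 1.5]{bd}, which holds precisely because $V$ is self-contragredient (so $DI\cong I$). Then \cite[Corollary~1.3]{etingof.penneys:2024:rigidity} gives rigidity of the monoidal product, and there is still a further step you skip: a priori the Grothendieck--Verdier duality $D$ (contragredient) need not coincide with the rigid duality; the paper identifies the two using \cite[Proposition 2.3]{bd} (they differ by tensoring with an invertible object, which must be $I$ by the $r$-category property). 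Without naming the $r$-category condition, your ``all simple objects have duals given by contragredients'' is circular --- that is essentially what is being proved.

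The genuine gap is in the non-degeneracy step. You assert that, by the classification in \cite{brochierwoike}, a genus-zero ribbon (GV) category extends to a modular functor \emph{if and only if} it is modular, so that the existence of $\VV$ ``forces'' triviality of the M\"uger center. This equivalence is not a theorem you can cite: the paper itself points out (\cref{sec:beyond}) that there are ribbon Grothendieck--Verdier categories that are \emph{not} modular and whose associated ansular functor nevertheless extends to a modular functor (\cite[Example 8.8]{brochierwoike}), and others where it does not. So ``extends to all genera'' does not abstractly imply ``non-degenerate braiding,'' and an actual argument is required. The paper supplies one: it counts the simple objects of $Z_2(\Cc_V)$ as $\dim \mathrm{HH}_0(Z_2(\Cc_V))$, identifies this with the dimension of the skein module $\mathsf{Sk}_{\Cc_V}(\SS^2\times\SS^1)$ via \cite{skeinfin}, and then uses the Heegaard-splitting formula of \cite{mwskein} together with the connectedness statement of \cref{corconnected} (which is where the existence of the modular functor $\VV$ actually enters) to show $\mathsf{Sk}_{\Cc_V}(\SS^2\times\SS^1)\cong \mathsf{Sk}_{\Cc_V}(\SS^3)\cong \CC$. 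Your proposal treats exactly the step that carries the real content as ``essentially automatic.''
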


A result obtaining a modular tensor structure from a finitely semisimple modular functor whose ribbon Grothendieck--Verdier category is an $r$-category is 
given in \cite[Corollary~1.4]{etingof.penneys:2024:rigidity} by combining \cite[Theorem~5.5.1]{baki} with 
\cite[Corollary~1.3]{etingof.penneys:2024:rigidity}.
For the concerns mentioned in \cref{remmfdef},
the proof of \cite[Corollary~1.4]{etingof.penneys:2024:rigidity} does not apply in our framework, so we give a different proof in which we will, however, still rely on the purely algebraic statement \cite[Corollary~1.3]{etingof.penneys:2024:rigidity}.

\begin{proof}
The ribbon Grothendieck--Verdier category $\Cc_V$ from \cref{cor:tensor} is finitely semisimple. Moreover, it is an $r$-category in the sense of \cite[Definition 1.5]{bd} because $V$ is self-dual.
By \cite[Corollary~1.3]{etingof.penneys:2024:rigidity} this implies that the monoidal product of $\Cc_V$ is rigid.
A priori, the Grothendieck--Verdier duality $D$ could be different from the rigid duality, but by \cite[Proposition 2.3]{bd} $D$ agrees with the rigid duality up to tensoring with an invertible object. This invertible object must be the monoidal unit $I$ because $\Cc_V$ is an $r$-category.
This proves that the Grothendieck--Verdier duality of $\Cc_V$ agrees with the rigid duality.
Since $I$ is simple, this makes $\Cc_V$ a ribbon fusion category.

We are therefore left to show that the Müger center $Z_2(\Cc_V)$ of $\Cc_V$ is trivial which, since $\Cc_V$ is semisimple, is equivalent to showing that $Z_2(\Cc_V)$ has exactly one simple object. We first observe that
\[\# \, \text{simple objects in $Z_2( \Cc_V)$} = \dim  \text{HH}_0 (Z_2(   \Cc_V)),
\] 
where $\text{HH}_0 (Z_2(\Cc_V))$ denotes the zeroth Hochschild homology of $Z_2(\Cc_V)$. In view of \cite[Proposition 4.4 and Lemma~4.5]{skeinfin}, this dimension be computed as the dimension of the skein module $\mathsf{Sk}_{\Cc_V}(\SS^2\times \SS^1)$ for $\Cc_V$ on the three-dimensional manifold $\SS^2\times \SS^1$ (see e.g., \cite[Section~2.1]{skeinfin} for a modern introduction to skein modules in the spirit of the classical text~\cite{Walker}).

Let us finish the proof by showing that $\mathsf{Sk}_{\Cc_V}(\SS^2\times \SS^1)$ is one-dimensional:
For any compact oriented three-dimensional manifold $M$ with Heegaard splitting $M=H'\cup_\Sigma H$,
\cite[Corollary~3.10]{mwskein} tells us
\begin{align*}
\mathsf{Sk}_{\Cc_V} (M)\cong \int^{X \in \int_{\Sigma}      \Cc_V   } \Phi_{\Cc_V}(H')X^\vee \otimes \Phi_{\Cc_V}(H)X \ . 
\end{align*} With \cref{corconnected}, it is now an immediate consequence that $\mathsf{Sk}_{\Cc_V} (M)\cong \mathsf{Sk}_{\Cc_V} (M')$ if $M$ and $M'$ admit a Heegaard splitting for the same surface (simply because the $\Phi_{Cc_V}(H)$ do not depend on $H$).
In particular, for $\Sigma=\mathbb{T}^2$, we find $\mathsf{Sk}_{\Cc_V}(\SS^2\times \SS^1)\cong\mathsf{Sk}_{\Cc_V}(\SS^3)$.
Since the quantum trace gives us an isomorphism $\mathsf{Sk}_{\Cc_V}(\SS^3)\cong k$, it follows that $\mathsf{Sk}_{\Cc_V}(\SS^2\times \SS^1)$ is one-dimensional, concluding the argument.
\end{proof}

\begin{remark}\label{remgvduality}
Since the braiding of $\Cc_V$ is non-degenerate,
the cyclic structure on $\Cc_V$ (here given by taking the contragredient representation) is actually unique relative to the balanced braided monoidal structure~\cite[Corollary 4.5]{mwcenter}. 
\end{remark}

\subsection{Comparison with Huang--Lepowsky's tensor product\label{sec:HL}}  
Understanding the properties of categories of $V$-modules is a central question in the theory of VOAs. In particular, in \cite{HL:I,HL:II,HL:III,HL:IV} a monoidal structure on $\Cc_V$, which we denote $\hlotimes$ here, is defined using analytic methods and intertwiners. There is a lot of evidence that indeed the monoidal categories $(\Cc_V, \cbotimes)$ and $(\Cc_V, \hlotimes)$ are naturally isomorphic. One can in fact deduce that, under our assumptions on $V$ and $\Cc_V$ one has
\[ M \hlotimes N = \bigoplus_{S \text{ simple}} \cat{V}^S_{M,N} \otimes_\CC  S,
\] where $\cat{V}^S_{M,N}$ is the space of \textit{intertwining operators} of type $\left( \substack{S\\ M \, N} \right)$. Furthermore \cite[Proposition 7.4]{zhu:global} provides a natural (functorial) isomorphism 
\begin{equation} \label{eq:isocat} \cat{V}^S_{M,N} \overset{\cong}{\longrightarrow} \Hom_\CC(\VV_0(M,N,S'),\CC),
\end{equation} which gives therefore a natural isomorphism $M \hlotimes N \cong M \cbotimes N$ 
that preserves the monoidal unit $V$.  Therefore, in order to compare $(\Cc_V, \cbotimes)$ and $(\Cc_V, \hlotimes)$ as monoidal categories, it remains to show compatibility with the associators. A reason supporting that this is indeed the case is that, as we explain in \cref{sec:tensor} for $\cbotimes$ and as described in \cite{HL:IV} for $\hlotimes$, both associators are naturally constructed from the decomposition of a sphere with four punctures into spheres with three punctures in two different ways. We therefore expect the following result, which we plan to explore in future work.

\begin{conj} The map \eqref{eq:isocat} induces an equivalence of monoidal categories between $(\Cc_V, \cbotimes)$ and $(\Cc_V, \hlotimes)$.
\end{conj}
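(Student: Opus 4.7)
The plan is to upgrade the pointwise natural isomorphism $M \hlotimes N \cong M \cbotimes N$ coming from \eqref{eq:isocat} to an equivalence of monoidal categories. Since both tensor products share the monoidal unit $V$ and, by construction, the same underlying bifunctor up to a natural isomorphism, the only substantive task is to verify that the associators agree under \eqref{eq:isocat}. My approach is to realize both associators as two different identifications of the same global four-point block on $\bTM_{0,4}$, and then reduce associator agreement to the comparison of two factorizations of a single conformal-block bundle.

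First, I would recall that on the $\hlotimes$ side Huang's construction in \cite{HL:IV,huang:rigidity} builds the associator by fixing a $4$-punctured sphere and comparing iterated compositions $Y_1(\cdot,z_1)Y_2(\cdot,z_2)$ of intertwining operators in the two domains $|z_1|>|z_2|>0$ and $|z_2|>|z_1-z_2|>0$, which correspond to the two pair-of-pants decompositions of that sphere. On the $\cbotimes$ side, as described in \cref{sec:tensor}, the associator is the composition of \eqref{eq:AB} and \eqref{eq:BC}, which are the sewing isomorphisms of $\VV$ at the two boundary strata of $\bTM_{0,4}$ corresponding to the same pair-of-pants decompositions.

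The concrete bridge I would build is the global block $\VV_0(A,B,C,Y')$ on $\bTM_{0,4}$. By \cite[Theorem 8.5.1]{DGT2}, its restriction to a formal neighborhood of each of the two relevant boundary strata admits a factorization as a direct sum of tensor products of three-point blocks, and by construction these factorizations realize \eqref{eq:AB} and \eqref{eq:BC} on the nose. The key intermediate step is to show that, under the Zhu-type isomorphism \eqref{eq:isocat} and the expansion map $\EE$ of \cref{lem:key}, the $q$-expansion of a conformal block around a maximally degenerate four-pointed curve coincides with the formal expansion of the pairing of $A\otimes B\otimes C$ against a composition of two intertwining operators evaluated in the corresponding chamber. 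This identification of formal series is essentially a four-point generalization of the functorial isomorphism behind \eqref{eq:isocat} from \cite[Proposition 7.4]{zhu:global}, together with the convergence results of Huang that turn these formal series into genuine analytic functions.

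The main obstacle I expect is the final global comparison: on the $\hlotimes$ side, the associator is constructed from the \emph{analytic} continuation between the two chambers, while on the $\cbotimes$ side it is obtained by gluing the two sewing isomorphisms through the projectively flat connection of \cite{DGT1}. What is needed is to show that, restricted to $\TM_{0,4}$, this connection agrees with the Knizhnik--Zamolodchikov-type connection whose horizontal sections are precisely the analytic continuations of compositions of intertwining operators, so that Huang's analytic associativity becomes the monodromy of $\VV_0(A,B,C,Y')$ along a path joining the two degenerations. Once this $\bTM_{0,4}$-comparison is in place, the pentagon and triangle axioms follow automatically from the modular functor axioms for $\VV$ on one side and from the corresponding Huang--Lepowsky coherence on the other, using only the $\bTM_{0,4}$- and $\bTM_{0,5}$-data by uniqueness of coherent extension.
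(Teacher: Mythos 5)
The statement you are addressing is not proved in the paper: it appears there as an \emph{Expectation}, which the authors explicitly defer to future work, noting only that ``one strategy to show this would be to directly compare the spaces of coinvariants on a sphere with four marked points with the three-fold tensor product defined by Huang and Lepowsky,'' and that a related analytic problem has been studied in \cite{moriwaki:2024:vertexoperatoralgebraparenthesized}. Your proposal follows precisely this suggested strategy, and its architecture is reasonable: you correctly isolate the comparison of associators as the only substantive task (the unit and the underlying bifunctors are already matched by \eqref{eq:isocat} together with Propagation of Vacua), and you correctly locate both associators as two factorizations of the single bundle $\VV_0(A,B,C,Y')$ over $\bTM_{0,4}$.

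However, what you have written is a roadmap, not a proof; the two steps you yourself flag as ``the key intermediate step'' and ``the main obstacle'' are exactly the open mathematical content, and neither is established by anything available in the paper or its references. Concretely: (i) the identification of the $q$-expansion of a four-point coinvariant near a boundary stratum of $\bTM_{0,4}$ with the formal composition of two intertwining operators is a genuine four-point extension of \cite[Proposition 7.4]{zhu:global}, which is only available for three points; you would need to prove that the sewing map $\EE$ of \cref{lem:key} intertwines with the $P(z)$-tensor-product expansions of Huang--Lepowsky, including matching normalizations and the precise dependence on tangent vectors and local coordinates. (ii) The identification of the DGT projectively flat connection on $\TM_{0,4}$ with the differential equations governing analytic continuation of products of intertwining operators is likewise asserted rather than proved, and it is where the analytic input (convergence and the passage from formal to genuine multivalued functions) must enter. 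Until (i) and (ii) are carried out, the claim that Huang's analytic associativity ``becomes the monodromy of $\VV_0(A,B,C,Y')$'' has exactly the status the paper assigns to the whole statement: a plausible expectation. Your closing remark that the pentagon and triangle axioms then come for free is essentially correct --- a monoidal equivalence only requires compatibility of the comparison isomorphism with associators and unitors, not an independent verification of coherence --- so the gap is concentrated entirely in steps (i) and (ii).
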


One strategy to show this would be to directly compare the spaces of coinvariants on a sphere with four marked points with the three-fold tensor product defined by Huang and Lepowsky. We note that, in an analytic setting, this problem has been recently analyzed in \cite{moriwaki:2024:vertexoperatoralgebraparenthesized}.

\subsection{The 2dCFT/3dTFT correspondence and correlators}
\label{sec:CFT} 
The topological approach to conformal field theory follows a two-step procedure: 
\begin{enumerate}
    \item One organizes the spaces of conformal blocks of the conformal field theory into a modular functor.
    \item One evaluates the modular functor on the disjoint union $\bar{\Sigma} \sqcup \Sigma$ of surfaces $\Sigma$ together with the orientation-reversed surface $\bar \Sigma$ and tries to find all consistent system of correlators, i.e.\ conformal blocks that are invariant under the action of the mapping class group and gluing. Finding the correlators is a mathematically precise incarnation of \textit{solving} the conformal field theory.
\end{enumerate}  
For the Reshetikhin--Turaev type modular functor $\Fk_\Cc$ of a modular fusion category $\Cc$, the problem of classifying and constructing the consistent systems of correlators was achieved in the series of papers of Fuchs--Runkel--Schweigert, partially with Fjelstad~\cite{frs1,frs2,frs3,frs4,ffrs,ffrsunique} in terms of \textit{special symmetric Frobenius algebras} inside the modular fusion category in question.  This  is often referred to as the \emph{FRS Theorem}.
Let us recall that a \emph{special symmetric Frobenius algebra} $F\in \Cc$ inside a pivotal finite tensor category with monoidal unit $I$
is an associative unital algebra with a symmetric non-degenerate pairing, see e.g.\ \cite{fuchsstigner}. In particular, this entails that in addition to the product $\mu \colon F \otimes F \to F$ with unit $\eta \colon I\to F$, there is a coproduct $\delta \colon F \to F\otimes F$ with counit $\varepsilon \colon F \to I$. A symmetric Frobenius algebra $F$ is called \emph{special} if  $\mu\circ\delta=\id_F$ and $\varepsilon\circ \eta \neq 0$.
 This solution procedure of rational two-dimensional conformal field theory via three-dimensional topological field theory can be interpreted as a \emph{holographic principle}~\cite{kapustinsaulina}.

There is however an open question that is explained in \cite[Section~3.3]{csrcft}: The relation between the mapping class group representation of the Reshetikhin--Turaev type modular functor associated to a modular fusion category and the flat connection on the vector bundles of conformal blocks over $\TMgn$ is not known for every $g$. An instance where such a comparison was established is for the affine VOA $L_\ell(\mathfrak{sl}_N)$ in \cite{andersenuenoinv}. 

Via the results of this article, we can prove a more general comparison. Throughout, $\Cc_V$ will denote the modular fusion category arising from the modular functor $\VV$.

\begin{theorem}\label{thm:comparisonrt}
Up to a contractible space of choices, the modular functor $\VV$ is the unique modular functor extending the modular fusion category $\Cc_V$ from genus zero to all surfaces. In particular, there is a canonical equivalence
\begin{align*}
    \VV \xrightarrow{\ \simeq \ } \Fk_{\Cc_V}
\end{align*}
of modular functors between $\VV$ and the Reshetikhin--Turaev type modular functor $\Fk_{\Cc_V}$ of the modular fusion category $\Cc_V$. This affords an extension of $\VV$ to a once-extended three-dimensional topological field theory.
\end{theorem}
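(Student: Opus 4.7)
The plan is to invoke the classification of modular functors from \cite{brochierwoike}: a $\Rexf$-valued modular functor is determined, up to a contractible space of choices, by its genus-zero restriction viewed as a cyclic framed $E_2$-algebra (i.e.\ as a ribbon Grothendieck--Verdier category together with the framing anomaly), and in the modular fusion case the unique such extension is (equivalent to) the Reshetikhin--Turaev type modular functor $\Fk_\Cc$ built out of ansular functors and factorization homology. Thus the whole theorem reduces to checking that $\VV$ and $\Fk_{\Cc_V}$ have the same genus-zero datum, after which the uniqueness produces the canonical equivalence automatically.

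The first verification I would carry out is that the genus-zero restriction of $\VV$, as a cyclic framed $E_2$-algebra, agrees with the modular fusion category $\Cc_V$ appearing in \cref{thm:modularfusion}. This is tautological: the monoidal product $\cbotimes$, the associator, the braiding, the balancing and the Grothendieck--Verdier duality $D=(-)'$ on $\Cc_V$ were all produced in \cref{sec:tensor} by the genus-zero restriction of $\VV$, so by construction the two genus-zero data coincide. The second verification is that $\VV$ and $\Fk_{\Cc_V}$ are modular functors over the same extension $\Surf^{c/2}$ of $\Surf$. On the $\VV$ side this is exactly \cref{thm:VGisMF}. On the $\Fk_{\Cc_V}$ side the framing anomaly is determined intrinsically by the ribbon twist of $\Cc_V$: the anomaly of $\Fk_{\Cc_V}$ is rigidly fixed by the action of the Dehn twist on a genus-one conformal block, equivalently by the $T$-transformation acting by $e^{2\pi i(h-c/24)}$ on characters of simple $V$-modules; this matches the anomaly $c/2$ coming from the Atiyah algebra $\tfrac{c}{2}\Ac_\Lambda$ in \cref{remanomaly}. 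With both checks in place, the classification theorem of \cite{brochierwoike} delivers the canonical equivalence $\VV \xrightarrow{\simeq} \Fk_{\Cc_V}$ of modular functors.

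For the final assertion, the Reshetikhin--Turaev construction \cite{rt1,rt2,turaev} upgrades $\Fk_{\Cc_V}$ to a once-extended three-dimensional topological field theory — equivalently, this extension is manifest via the factorization homology / ansular functor picture used in \cref{corconnected}. Transporting this extension along the equivalence $\VV\simeq \Fk_{\Cc_V}$ yields the desired extension of $\VV$ to a once-extended 3dTFT.

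The main obstacle I expect is making the matching of the framing anomaly precise inside the bicategorical operadic framework of \cite{brochierwoike}: the anomaly on the $\VV$ side is produced geometrically through the Atiyah sequence \eqref{eq:atiyah} with scalar $c/2$, whereas on the $\Fk_{\Cc_V}$ side it is produced topologically by the central extension of the mapping class groupoid induced by the ribbon element of $\Cc_V$. The two can be compared by reduction to the torus, where they both become the canonical central extension of $SL_2(\ZZ)$ by $\ZZ$ acting with the universal anomaly $e^{2\pi i c/24}$ on the Grothendieck group $K_0(\Cc_V)\otimes\CC$; once this identification is in place, uniqueness of extensions (relative to the fixed anomaly cocycle) in \cite{brochierwoike} closes the argument.
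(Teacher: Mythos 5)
Your proposal is correct and takes essentially the same route as the paper: both reduce the theorem to the classification of modular functors in \cite{brochierwoike} (Theorem~6.8 and Section~8.1 there), using that $\Cc_V$ is a modular fusion category by \cref{thm:modularfusion} and that it arises by construction as the genus-zero restriction of $\VV$. The only difference is that you treat the matching of the framing anomaly as a separate verification, whereas in the framework of \cite{brochierwoike} the extension $\cat{Q}$ is part of the data being classified, so the uniqueness statement already absorbs this point and no independent comparison of anomaly cocycles is needed.
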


\begin{proof}
    Since $\Cc_V$ is a modular fusion category by \cref{thm:modularfusion} the statement follows from the classification of modular functors~\cite[Theorem~6.8 and Section~8.1]{brochierwoike}.
\end{proof}

We note that this establishes a universal topological property of the modular fusion category $\Cc_V$ arising from conformal blocks.

Admittedly, it might be preferable to formulate such a comparison not for $\Cc_V$, but instead for the Huang--Lepowsky tensor structure (defined on the same underlying category, but a priori differently). As explained in \cref{sec:HL}, such a comparison is currently out of reach. However, \cref{thm:comparisonrt} is enough to conclude that $\VV$ forms a modular functor that can be described as the Reshetikhin--Turaev type modular functor of \emph{some} modular fusion category.
For many practical purposes, this is entirely enough as the following applications will demonstrate.

The space of conformal blocks for the torus $\mathbb{T}^2$ (which in view of \cref{thm:comparisonrt} is identified with the vector spaces freely generated by the set of simple objects of $\Cc_V$) comes with a multiplication induced by the monoidal product of $\Cc_V$ (the one from \cref{cor:tensor}) or, equivalently, by the tensor product of Huang--Lepowsky. In fact these agree because, for the algebra structure to be comparable, the associators are not relevant.  In other words, $(\Cc_V, \cbotimes)$ and $(\Cc_V, \hlotimes)$ have the same \emph{Verlinde algebra}~\cite{verlinde}.  
For this Verlinde algebra, we can give an alternative proof of \cite[Theorem 5.2]{huangverlinde} which heavily relies on \cref{thm:comparisonrt}:

\begin{corollary} \label{cor:verlinde}
The automorphism of the space of conformal blocks for the torus associated to the mapping class
\begin{align*}
    S=\begin{pmatrix} 0&-1\\ 1 & \phantom{-}0\end{pmatrix}\in \mathsf{SL}(2,\ZZ)\cong \Map(\mathbb{T}^2)
\end{align*}
transforms the multiplication of the Verlinde algebra into a diagonal multiplication, i.e.\ the fusion multiplication lies in the mapping class group orbit of a diagonal multiplication.
\end{corollary}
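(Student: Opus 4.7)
The plan is to reduce \cref{cor:verlinde} to the classical categorical Verlinde formula for a modular fusion category, using \cref{thm:comparisonrt} as the bridge. First, I would invoke \cref{thm:comparisonrt} to transport the question across the equivalence $\VV\xrightarrow{\simeq}\Fk_{\Cc_V}$ of modular functors. This identifies $\VV(\mathbb{T}^2)$, as a projective $\mathsf{SL}(2,\ZZ)\cong\Map(\mathbb{T}^2)$-representation, with the free $\CC$-vector space on the set of isomorphism classes of simple objects of $\Cc_V$, the action of $S$ being given by the categorical $S$-matrix of the modular fusion category $\Cc_V$ (\cref{thm:modularfusion}).

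Next, I would observe that the multiplication on $\VV(\mathbb{T}^2)$ appearing in the statement of \cref{cor:verlinde} is induced by the pair-of-pants in $\Surf^{c/2}$ via the operadic composition $\mathbb{T}^2\sqcup \mathbb{T}^2\to \mathbb{T}^2$. Since equivalences of modular functors are by definition compatible with operadic composition, this multiplication is transported along $\VV\simeq \Fk_{\Cc_V}$ to the pair-of-pants multiplication in $\Fk_{\Cc_V}$. A standard excision/gluing computation in the Reshetikhin--Turaev type modular functor identifies this as the fusion product with structure constants
\begin{align*}
N_{ij}^k=\dim_{\CC}\Hom_{\Cc_V}(X_i\otimes X_j,X_k),
\end{align*}
i.e.\ with the Grothendieck ring multiplication of $\Cc_V$.

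The final step is to apply the Verlinde formula for modular fusion categories: in the basis of simple objects, the left multiplication matrices $\hat N_i$ are simultaneously diagonalized by $S$, with
\begin{align*}
S^{-1}\hat N_i\,S=\mathrm{diag}\!\bigl(S_{ij}/S_{0j}\bigr)_j.
\end{align*}
This categorical version holds for every modular fusion category and can be cited from \cite[Theorem~3.1.14]{baki} or \cite[Theorem~II.3.9]{turaev}; its proof is internal to the modular fusion category and relies only on the surgery description of $S^3$ together with the computation of the Hopf link invariant. Simultaneous diagonalization of all $\hat N_i$ by $S$ is precisely the statement that, after conjugating by $S$, the Verlinde multiplication becomes diagonal in the new basis.

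I do not anticipate a serious obstacle: the nontrivial input is exactly \cref{thm:comparisonrt}, after which the argument is a textbook consequence of the axioms of a modular fusion category. The most delicate point to double-check is the compatibility between the pair-of-pants multiplication on $\VV(\mathbb{T}^2)$ and its counterpart in $\Fk_{\Cc_V}$, but this is built into the notion of equivalence of modular functors used in \cref{thm:comparisonrt} and therefore requires no extra work.
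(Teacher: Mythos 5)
Your proposal is correct and follows essentially the same route as the paper: identify the $\mathsf{SL}(2,\ZZ)$-action on $\VV(\mathbb{T}^2)$ with the Reshetikhin--Turaev one via \cref{thm:comparisonrt} and then quote the categorical Verlinde formula for modular fusion categories. The only imprecision is your description of the Verlinde multiplication as an operadic composition $\mathbb{T}^2\sqcup\mathbb{T}^2\to\mathbb{T}^2$ in $\Surf^{c/2}$ --- closed tori have no boundary circles to glue in the surface operad, so the paper instead takes the multiplication to be the one induced directly by the monoidal product of $\Cc_V$ (equivalently, realized by the pair-of-pants-times-$\SS^1$ cobordism once the three-dimensional extension from \cref{thm:comparisonrt} is available), which does not affect the argument.
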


\begin{proof}
The \emph{categorical version} of the Verlinde formula \cite{mooreseiberg,turaev} for a modular fusion category gives us such a statement for the $\mathsf{SL}(2,\mathbb{Z})$-representation built via the Reshetikhin--Turaev construction. 
\Cref{thm:comparisonrt} tells us that the mapping class group representations on the spaces $\VV$ arise actually by applying the Reshetikhin--Turaev construction to \emph{some} modular fusion category whose Verlinde algebra agrees with the one in \cite{huangverlinde}. Hence, the statement follows.
\end{proof}

Another consequence of \cref{thm:comparisonrt} is that the results of the recent preprint~\cite{godfard2025semisimplicityconformalblocks} 
can be used to see that the mapping class group representations produced by $\VV$
 are semisimple. 

\medskip

Our main application, however, is that \cref{thm:comparisonrt} ensures that the FRS Theorem applies because the latter is fortunately formulated for \emph{any} modular fusion category regardless of whether such a modular fusion category arises through the theory of Huang--Lepowsky. 
We will formulate this in \cref{thm:correlators} below, for which we will need to introduce a little notation. Given $C=(C,P_\bullet, \tau_\bullet) \in \bTMgn$, we will use the notation $\bar{C}=(\bar{C}, \bar{P}_\bullet, \bar{\tau}_\bullet)$ to denote the curve obtained from $C$ by reversing the direction of $\tau$, that is  $\bar{C}=(C, P_\bullet, -\tau_\bullet)$. Note that this operation induces a $\ZZ/2\ZZ$ action on $\bTMgn$ which, in $\Surf$, amounts to the operation sending an oriented surface $\Sigma$ to the surface $\bar{\Sigma}$ with the opposite orientation. Let $B \in \Cc_V \boxtimes \Cc_V$ and set
\[ \VV(B^n)_{[C,P_\bullet,\tau_\bullet]} \coloneqq \VV(B, \dots, B)_{[C \sqcup \bar{C};P_1, \bar{P}_1, \dots, P_n, \bar{P}_n; \tau_1, \bar{\tau}_1, \dots, \tau_{n}, \bar{\tau}_n]},  
\] so that each copy of $B$ is inserted at the pair of points $(P_i, \bar{P}_i)$. Note that when $(C,P_\bullet, \tau_\bullet)$ vary in $\TMgn$, the vector space $\VV(B^n)_{[C,P_\bullet,\tau_\bullet]}$ defines a vector bundle
$\VV_g(B^n)$ on $\TMgn$ which is naturally equipped with a flat connection (where the anomaly is trivial). Using this notation, \cref{thm:comparisonrt} implies, when combined with the FRS Theorem, the following:

\begin{corollary}\label{thm:correlators}
    Let $F$ be a special symmetric Frobenius algebra $F$ in $\Cc_V$ and let $\mathsf{B}(F)$ be the \textit{bulk algebra} $\mathsf{B}(F) \in \Cc_V\boxtimes \Cc_V$  associated to $F$. Then, for every $g$ and $n$,  $F$ gives rise to a system of flat sections $s_{g,n}^F$ of $\VV_g(\mathsf{B}(F)^n)$ which is compatible with gluing. 
\end{corollary}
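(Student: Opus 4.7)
The plan is to combine \cref{thm:comparisonrt} with the FRS Theorem. Since $\Cc_V$ is a modular fusion category by \cref{thm:modularfusion}, the FRS Theorem applies directly to $\Cc_V$. Given the special symmetric Frobenius algebra $F \in \Cc_V$ and its associated bulk algebra $\mathsf{B}(F) \in \Cc_V \boxtimes \Cc_V$, this theorem produces, for each stable $(g,n)$, a correlator $c_{g,n}^F$ in the space $\Fk_{\Cc_V}(\bar{\Sigma}_{g,n} \sqcup \Sigma_{g,n};\mathsf{B}(F), \dots, \mathsf{B}(F))$, where $\mathsf{B}(F)$ is inserted at each pair $(P_i, \bar P_i)$. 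By construction, $c_{g,n}^F$ is invariant under the diagonal action of the pure mapping class group $\Map(\Sigma_{g,n})$ on $\bar{\Sigma}_{g,n} \sqcup \Sigma_{g,n}$, and the collection $\{c_{g,n}^F\}_{g,n}$ is compatible with gluing through the factorization isomorphisms of $\Fk_{\Cc_V}$.

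Next, I would transport these correlators across the canonical equivalence $\VV \xrightarrow{\simeq} \Fk_{\Cc_V}$ supplied by \cref{thm:comparisonrt}. Since this is an equivalence of modular functors, it identifies on both sides the mapping class group representations and the factorization isomorphisms. For $(C, P_\bullet, \tau_\bullet) \in \TMgn$, the correlator $c_{g,n}^F$ therefore yields an element $s_{g,n}^F \in \VV(\mathsf{B}(F)^n)_{[C, P_\bullet, \tau_\bullet]}$. The crucial observation linking this to \emph{flat} sections is that on the double $C \sqcup \bar{C}$ the framing anomaly cancels: the contribution of $\tfrac{c}{2}\Ac_\Lambda$ on $C$ is cancelled by its opposite-sign counterpart on $\bar{C}$, which is precisely the effect of tangent-vector reversal since this corresponds to orientation reversal at the topological level. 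Consequently, the projective connection on $\VV(\mathsf{B}(F)^n)$ descends to an honest flat connection, and the corresponding mapping class group representation is linear rather than merely projective.

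By the Riemann--Hilbert correspondence, mapping class group invariance of $c_{g,n}^F$ translates into the assertion that the elements $s_{g,n}^F$ form flat sections of the vector bundle $\VV_g(\mathsf{B}(F)^n)$ over $\TMgn$. The gluing compatibility demanded by the corollary is then the image of the FRS factorization identities under the comparison equivalence, realized concretely by the factorization isomorphisms of $\VV$ established in \cref{prop:glueholds}.

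The main technical point I anticipate is the careful bookkeeping required to verify that the anomaly cancellation on the double is genuinely compatible with the equivalence $\VV \simeq \Fk_{\Cc_V}$, and to match the diagonal $\Map(\Sigma_{g,n})$-action used by FRS with the action on $\VV(\mathsf{B}(F)^n)$ determined by the $\Surf^{c/2}$-algebra structure restricted to the doubled locus. Given the formalism of extensions and the explicit construction of $\Surf^{c/2}$ from \cref{sec:CBareMF}, these verifications should amount to a direct unpacking of the definitions, but they are where the real content of the translation between the topological FRS statement and the algebro-geometric conclusion resides.
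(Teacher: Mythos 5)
Your proposal is correct and follows essentially the same route as the paper: the corollary is obtained by applying the FRS Theorem to the modular fusion category $\Cc_V$ and transporting the resulting consistent system of correlators on the doubles $\bar{\Sigma}\sqcup\Sigma$ through the equivalence $\VV\simeq\Fk_{\Cc_V}$ of \cref{thm:comparisonrt}, using that the anomaly is trivial on the double so that mapping class group invariance translates via Riemann--Hilbert into flatness of the sections $s_{g,n}^F$. The paper treats this as an immediate consequence of \cref{thm:comparisonrt} plus FRS and does not spell out the bookkeeping you flag at the end, so your write-up is, if anything, more explicit than the original.
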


\begin{remark} We collect some observations.\begin{enumerate}[label=(\alph*)] \item When $F=V$ is the monoidal unit, the underlying object of its bulk algebra $\mathsf{B}(F)$ can be identified with $\Delta = \oplus S \boxtimes S'$. Applying \cref{thm:correlators} to $F=V$ allows us to conclude the very non-trivial statement that non-zero flat sections for $\VV_g(\Delta^n)$ always exist. In the language of conformal field theory, this corresponds to the so-called \emph{Cardy case}, see \cite{FUCHS2018287} for more background.

\item The spaces of conformal blocks $\VV$ evaluated on $C\sqcup \bar C$ belong to a modular functor that by restriction to genus zero gives us  the modular fusion category $\Cc_V\boxtimes \bar{\Cc}_V$, where $\bar{\Cc}_V$ has the same monoidal product, but the inverse braiding and balancing; this is again an easy consequence \cite[Theorem~6.8]{brochierwoike}. 
Actually, the modular functor associated to $\bar{\Cc}\boxtimes\Cc$ for any modular fusion category is an open-closed modular functor (this means that one has also spaces of conformal blocks for surfaces with embedded intervals in their boundary), and all consistent systems of correlators compatible in a certain sense with this open-closed extension arise from special symmetric Frobenius algebras~\cite{ffrsunique}.
 \end{enumerate}
\end{remark}

\subsection{Beyond semisimplicity} \label{sec:beyond}
The previously cited works of Huang and Lepowsky have been extended, together with Zhang, beyond our assumptions to include, for instance, non-rational VOAs \cite{HLZ}. As established in \cite{alsw}, under the assumptions of Huang, Lepowsky and Zhang, the category of $V$-modules is a ribbon Grothendieck--Verdier category. Similarly, the definition of coinvariants $\VV$ given in \cite{FBZ:2004,NT,DGT1,DGT2}, which we used in \cref{sec:tensor} to define the tensor product $\cbotimes$ does not require that the VOA it depends on is rational (or even $C_2$-cofinite). In \cite{DGK1} it is shown that if $V$ is of CFT type and $C_2$-cofinite, then the sheaves of coinvariants $\VV(X_\bullet)$ are coherent on $\bTMgn$, and since they admit a connection on $\TMgn$, they give rise to vector bundles on $\TMgn$. We note, however, that, without the rationality assumption, it is not known whether the Sewing Theorem, crucially used in the proof of \cref{lem:key} to construct the gluing isomorphism of \cref{prop:glueholds}, holds in this setting. In \cite{DGK2}, a sufficient condition to ensure that a Sewing-type Theorem holds (called \textit{smoothing property}) is provided in terms of algebraic properties of the \textit{mode transition algebra} $\MTA=\MTA(V)$ of $V$, an associative algebra naturally associated with $V$.

As described in \cref{sec:GVbackground}, in order to obtain a ribbon Grothendieck--Verdier structure on $\Cc_V$ from conformal blocks, it would suffice to analyze the properties of sheaves of conformal blocks on the moduli spaces $\bTM_{0,n}$ of rational curves, and not consider curves of higher genus when showing an analogue of \cref{thm:modularfusion}. Using the language of \cref{sec:GVbackground}, it makes sense to pose the following question:

\begin{question} \label{qs:CBareE2algebras} Under which assumptions on $V$, do the associated conformal blocks $\VV$ define a cyclic framed $E_2$-algebra?
\end{question}

In the following example we discuss one possible starting point to tackle the above question.
\begin{exx}
Let $V=\pi_r$ be the rank-$r$ Heisenberg VOA (for $r \geq 1$). Since sheaves of coinvariants $\VV_0(X_\bullet)$ arising from $\pi_r$-modules $X_\bullet$ are coherent on $\bTM_{0,n}$ (see \cite{DG}), one can show that these are indeed vector bundles of finite rank over $\TM_{0,n}$ which are naturally equipped with a flat connection. Furthermore, as a consequence of the \textit{smoothing property} \cite{DGK2} (which holds for $\pi_r$ as explicitly computed in \cite[Section 7]{DGK2} and \cite[Section 6.1]{DGK3}), these sheaves extend to vector bundles to the whole space $\bTM_{0,n}$ (and the connection naturally extends to the boundary acquiring logarithmic singularities along $\TDelta_{0,n}$). One could further adapt the methods used in \cref{subsec:pov} to show that the analogue of \eqref{eq:pov} holds true. Moreover, the smoothing property mentioned above ensures that these bundles satisfy an analogue of \eqref{eq:specomp-g1g2}. Namely, there is an isomorphism
\begin{equation} \label{eq:specompMTA} \Sp_e \VV_0(X_1, \dots, X_{n_1}, Y_1, \dots, Y_{n_2}) \cong  \VV_{0}(\MTA, X_1, \dots, X_{n_1}, Y_1, \dots, Y_{n_2}),
\end{equation}
of \textit{vector bundles} over $\bTM_{0,n_1+1} \times  \bTM_{0,n_2+1}$, where the mode transition algebra $\MTA$ can be naturally viewed as an element of $\Cc_{\pi_r} \boxtimes \Cc_{\pi_r}$. We note that, in order to be able to relate \eqref{eq:specompMTA} with \eqref{eqnexcision}---and obtain in this case a cyclic framed $E_2$-algebra---one would need to show that indeed there is an isomorphism between the coevaluation element $\Delta$ and the mode transition algebra $\MTA$, and further that \eqref{eq:specompMTA} is an isomorphism of D-modules (and not merely of vector bundles).  \end{exx}

Let now $V$ be a more general VOA. The {smoothing property} introduced in \cite{DGK2} is what implies, under finiteness of conformal blocks, the existence of an isomorphism of vector bundles of the type \eqref{eq:specompMTA} (for every genus). Since this could, in principle, hold also beyond the case of Heisenberg VOAs, in order to answer \cref{qs:CBareE2algebras} it is natural to pose the following question: 

\begin{question} \label{qs:MTAcoend} What is the relation between $\MTA$ and $\Delta$? Under which conditions are they isomorphic?
\end{question}

When $V$ is rational, then it is true that $\MTA =\Delta$. In fact, as shown in \cite[Remark 3.4.6]{DGK2}, one naturally has that \[\MTA \cong \bigoplus_{S \text{ simple}} S \boxtimes S',\] and we saw that the same decomposition holds true for $\Delta$. We believe that \cref{qs:MTAcoend} has a positive answer in the case $V=\pi_S$. However, we know from \cite[Proposition 9.1.4]{DGK2} that the smoothing property fails for the $C_2$-cofinite, but not rational VOA $W(p)$. Although this does not imply that an isomorphism similar to \eqref{eq:specompMTA} cannot exist, it might be an evidence that the relation between $\Delta$ and $\MTA$ could be quite subtle.

It is also natural to understand to what extent $\VV$ defines a modular functor in every genus. However, to formulate a perhaps more sensible question, we will need to recall some results and terminology. We note that in the situation in which we have a not necessarily semisimple modular category (but which is still rigid), an associated modular functor can still be built through the construction of Lyubashenko
\cite{lyu,lyubacmp,lyulex} (see also \cite[Section~8.3]{brochierwoike} for a factorization homology description of this construction). However, if we are just given a ribbon Grothendieck--Verdier category that is not necessarily modular, we can generally only build an \emph{ansular functor}~\cite{10.1093/imrn/rnad178}, a version of the notion of a modular functor in which all surfaces are replaced with handlebodies. In some cases, these extend to modular functors~\cite[Example 8.8]{brochierwoike} even though the category is not modular;
in other cases, they do not~\cite[Example 11.10]{woike2024cyclicmodularmicrocosmprinciple}. 
In any case, one may obtain from a ribbon Grothendieck--Verdier category a pivotal Grothendieck--Verdier category by forgetting the braiding and from a such a datum, one may construct an \textit{open modular functor}~\cite{envas}. In view of this, we therefore also pose the following:

\begin{question}\label{qs:open} Under which assumptions on $V$, 
can one adapt the construction of \cite{FBZ:2004} to build an ``\textit{open version}'' of $\VV$? When would this define an open modular functor? 
\end{question}

We note that the construction of conformal blocks from \cite{FBZ:2004} naturally belongs to the realm of algebraic geometry, while the notion of open modular functor is purely topological. Therefore, one key step towards answering \cref{qs:open} is to understand how to phrase the notion of an open modular functor in the algebraic framework.

\vfill

\footnotesize

\noindent{\textsc{C. Damiolini, Department of Mathematics, University of Texas at Austin, Austin TX, USA}}\\
\href{mailto:chiara.damiolini@austin.utexas.edu}{\texttt{chiara.damiolini@austin.utexas.edu}}

\medskip

\noindent{\textsc{L. Woike, Université Bourgogne Europe, CNRS, IMB UMR 5584, F-21000 Dijon, France}}\\
\href{mailto:lukas.woike@u-bourgogne.fr}{\texttt{lukas.woike@ube.fr}}

\end{document}